\documentclass[11pt]{amsart}
\usepackage{amssymb,amsthm,amsmath,epsfig,latexsym}
\usepackage{calc,times,verbatim}
\usepackage{geometry}
\geometry{margin=1in}
\usepackage{color}

\begin{document}

\newcommand{\mmbox}[1]{\mbox{${#1}$}}
\newcommand{\proj}[1]{\mmbox{{\mathbb P}^{#1}}}
\newcommand{\Cr}{C^r(\Delta)}
\newcommand{\CR}{C^r(\hat\Delta)}
\newcommand{\affine}[1]{\mmbox{{\mathbb A}^{#1}}}
\newcommand{\Ann}[1]{\mmbox{{\rm Ann}({#1})}}
\newcommand{\caps}[3]{\mmbox{{#1}_{#2} \cap \ldots \cap {#1}_{#3}}}
\newcommand{\Proj}{{\mathbb P}}
\newcommand{\N}{{\mathbb N}}
\newcommand{\Z}{{\mathbb Z}}
\newcommand{\R}{{\mathbb R}}
\newcommand{\A}{{\mathcal{A}}}
\newcommand{\Tor}{\mathop{\rm Tor}\nolimits}
\newcommand{\Ext}{\mathop{\rm Ext}\nolimits}
\newcommand{\Hom}{\mathop{\rm Hom}\nolimits}
\newcommand{\im}{\mathop{\rm Im}\nolimits}
\newcommand{\rank}{\mathop{\rm rank}\nolimits}
\newcommand{\supp}{\mathop{\rm supp}\nolimits}
\newcommand{\arrow}[1]{\stackrel{#1}{\longrightarrow}}
\newcommand{\CB}{Cayley-Bacharach}
\newcommand{\coker}{\mathop{\rm coker}\nolimits}
\newcommand{\fm}{\mathfrak m}
\newcommand{\rees}{{R[It]}}
\newcommand{\fiber}{{\mathcal{F}}(I)}
\sloppy
\theoremstyle{plain}

\newtheorem*{thm*}{Theorem}
\newtheorem{defn0}{Definition}[section]
\newtheorem{prop0}[defn0]{Proposition}
\newtheorem{quest0}[defn0]{Question}
\newtheorem{thm0}[defn0]{Theorem}
\newtheorem{lem0}[defn0]{Lemma}
\newtheorem{corollary0}[defn0]{Corollary}
\newtheorem{example0}[defn0]{Example}
\newtheorem{remark0}[defn0]{Remark}
\newtheorem{conj0}[defn0]{Conjecture}

\newenvironment{defn}{\begin{defn0}}{\end{defn0}}
\newenvironment{conj}{\begin{conj0}}{\end{conj0}}
\newenvironment{prop}{\begin{prop0}}{\end{prop0}}
\newenvironment{quest}{\begin{quest0}}{\end{quest0}}
\newenvironment{thm}{\begin{thm0}}{\end{thm0}}
\newenvironment{lem}{\begin{lem0}}{\end{lem0}}
\newenvironment{cor}{\begin{corollary0}}{\end{corollary0}}
\newenvironment{exm}{\begin{example0}\rm}{\end{example0}}
\newenvironment{rem}{\begin{remark0}\rm}{\end{remark0}}

\newcommand{\defref}[1]{Definition~\ref{#1}}
\newcommand{\conjref}[1]{Conjecture~\ref{#1}}
\newcommand{\propref}[1]{Proposition~\ref{#1}}
\newcommand{\thmref}[1]{Theorem~\ref{#1}}
\newcommand{\lemref}[1]{Lemma~\ref{#1}}
\newcommand{\corref}[1]{Corollary~\ref{#1}}
\newcommand{\exref}[1]{Example~\ref{#1}}
\newcommand{\secref}[1]{Section~\ref{#1}}
\newcommand{\remref}[1]{Remark~\ref{#1}}
\newcommand{\questref}[1]{Question~\ref{#1}}

\newcommand{\std}{Gr\"{o}bner}
\newcommand{\jq}{J_{Q}}

\title{Ideals generated by $a$-fold products of linear forms have linear graded free resolution}
\author{Ricardo Burity, \c{S}tefan O. Toh\v{a}neanu and Yu Xie}

\subjclass[2010]{Primary 13D02; Secondary 14N20, 52C35, 13A30, 14Q99}
\keywords{linear free resolution, ideals generated by products of linear forms,  Orlik-Terao algebra, ideal of fiber type, star configurations \\
\indent The first author was partially supported by CAPES - Brazil (grant: PVEX - 88881.336678/2019-01). \\
\indent Burity's address: Departamento de Matemática, Universidade Federal da Paraiba, J. Pessoa, Paraiba, 58051-900, Brazil,
Email: ricardo@mat.ufpb.br.\\
\indent Tohaneanu's address: Department of Mathematics, University of Idaho, Moscow, Idaho 83844-1103, USA, Email: tohaneanu@uidaho.edu.\\
\indent Xie's address: Department of Mathematics, Widener University, Chester, Pennsylvania 19013, USA, Email: yxie@widener.edu.}

\begin{abstract}
Given $\Sigma\subset R:=\mathbb K[x_1,\ldots,x_k]$, where $\mathbb K$ is a field of characteristic 0, any finite collection of linear forms, some possibly proportional, and any $1\leq a\leq |\Sigma|$, we prove that $I_a(\Sigma)$, the ideal generated by all $a$-fold products of $\Sigma$, has linear graded free resolution. This allows us to determine a generating set for the defining ideal of the Orlik-Terao algebra of the second order of a line arrangement in $\mathbb P_{\mathbb{K}}^2$, and to conclude that for the case $k=3$, and $\Sigma$ defining such a line arrangement, the ideal $I_{|\Sigma|-2}(\Sigma)$ is of fiber type. We also prove several conjectures of symbolic powers for defining ideals of star configurations of any codimension $c$.
\end{abstract}

\maketitle

\section{Introduction}

Let $R:=\mathbb K[x_1,\ldots,x_k]$ be the ring of (homogeneous) polynomials with coefficients in a field $\mathbb K$, with the standard grading. Denote ${\frak m}:=\langle x_1,\ldots,x_k\rangle$ to be the irrelevant maximal ideal of $R$. Let $\ell_1,\ldots,\ell_n$ be linear forms in $R$, some possibly proportional, and denote this collection by $\Sigma=(\ell_1,\ldots,\ell_n)\subset R$. For $\ell\in \Sigma$, by $\Sigma\setminus\{\ell\}$ we will understand the collection of linear forms of $\Sigma$ from which one copy of $\ell$ has been removed. Also, we denote $|\Sigma|=n$, and ${\rm rk}(\Sigma):={\rm ht}(\langle \ell_1,\ldots,\ell_n\rangle)$.

Let $1\leq a\leq n$ be an integer and define {\em the ideal generated by (all) $a$-fold products of $\Sigma$} to be the ideal of $R$ $$I_a(\Sigma):=\langle \{\ell_{i_1}\cdots \ell_{i_a}|1\leq i_1<\cdots<i_a\leq n\}\rangle.$$ We also make the convention $I_0(\Sigma):=R$, and $I_b(\Sigma)=0$, for all $b>n$. Also, if $\Sigma=\emptyset$, $I_a(\Sigma)=0$, for any $a\geq 1$. In some places along the exposition we will find it more convenient to use the notation $I_a(\ell_1\cdots\ell_n)$ for the same ideal $I_a(\Sigma)$. Also, an element $\ell_{i_1}\cdots \ell_{i_a}$ will be called standard generator of $I_a(\Sigma)$.

\medskip

A homogeneous ideal $I\subset R$ generated in degree $d$ is said to have {\em linear (minimal) graded free resolution}, if one has the graded free resolution $$0\rightarrow R^{n_{b+1}}(-(d+b))\rightarrow\cdots\rightarrow R^{n_2}(-(d+1))\rightarrow R^{n_1}(-d)\rightarrow R \rightarrow R/I\rightarrow 0,$$ for some positive integer $b$. The integers $n_j\geq 1$ are called the {\em Betti numbers} of $R/I$. By convention, the zero ideal has linear graded free resolution. Also we say that $R/I$ has linear graded free resolution if and only if $I$ has linear graded free resolution.

\cite[Conjecture 1]{AnGaTo} states that for any collection of linear forms $\Sigma$, and any $1\leq a\leq|\Sigma|$, the ideals $I_a(\Sigma)$ (or $R/I_a(\Sigma)$) have linear graded free resolution. In \cite{ToXi} it is presented the current state of this conjecture, as well as it is shown that the conjecture is true whenever the support of $\Sigma$ (i.e., the set of nonproportional elements of $\Sigma$) is generic.

In Section 2,  we show the validity of this conjecture in its full generality (see Theorem \ref{theorem1}). This result lines up with several other results in literature where various modules/ideals are shown to have graded linear free resolutions: first, one needs to mention the landmark paper \cite{EiGo} from which a lot of the techniques are derived from their general exposition, and second, the landmark paper \cite[Theorem 3.1]{CoHe} where it is shown that any product of ideals generated by linear forms has linear free resolution.

The proof of the similar results in \cite{ToXi} or \cite{CoHe} relies heavily on having the knowledge of the primary decomposition of the saturation of the corresponding ideals (see \cite[Proposition 2.2]{ToXi}, and \cite[Lemma 3.2]{CoHe}), because then via Remark \ref{remark0} below, and an induction argument that uses the inequalities of regularity under short exact sequences, one would obtain the desired results. Our approach is not that much different, except that we prove the claimed primary decomposition simultaneously with the linear free resolution result, using the same induction for both. The proof of the similar result \cite[Theorem 2.4]{To3} uses a completely different strategy that allows to find also the betti numbers.

\begin{rem}\label{remark0} Let $J\subset R$ be an ideal generated in degree $a$. Then $J\subseteq J^{\rm sat}\cap {\frak m}^a$.\footnote{If $I\subset R$ is a homogeneous ideal, by definition $I^{\rm sat}:=\{f\in R \, |\, \exists \, n(f)\geq 0 \mbox{ such that }{\frak m}^{n(f)}\cdot f\subset I\}$.} If $R/J$ has linear graded free resolution (equivalently, ${\rm reg}(R/J)=a-1$), since ${\rm H}_{\frak m}^0(R/J)=J^{\rm sat}/J$, by \cite[Theorem 4.3]{Ei}, we have $(J^{\rm sat}/J)_e=0, \mbox{ for any }e\geq a$. This means that $J^{\rm sat}\cap {\frak m}^a\subseteq J$, and therefore $$J=J^{\rm sat}\cap {\frak m}^a.$$
\end{rem}

\medskip

In Section 3, we apply Theorem \ref{theorem1} to study Rees algebras of $I_a(\Sigma)$. Based on this theorem,  we observe that the ideal $I_a(\Sigma)$ has linear powers for any $\Sigma$ and any $a$ (see Remark \ref{gensRees}). This property may give more advantages in studying the Rees algebras of $I_a(\Sigma)$. A first question is if an ideal with linear powers is of fiber type, and \cite[Example 2.6]{BrCoVa} gives an example of an ideal with linear powers that is not of fiber type. The ideal presented in the counter example is generated by products of linear forms, but it is not an ideal of the type $I_a(\Sigma)$. Therefore we are conjecturing that the ideals generated by $a$-fold products of linear forms are of fiber type. This conjecture was verified to be true by  \cite[Theorem 4.2]{GaSiTo}  when $\Sigma$ defines a hyperplane arrangement, and $a=|\Sigma|-1$.  The proof didn't appeal to the linear powers property, since Theorem \ref{theorem1} was not available at that time. Here in this paper, we prove this conjecture for another case, i.e., when $\Sigma$ defines a line arrangement $\A$ in $\mathbb P^2$ with $a=|\A|-2$. The high level of technical computations are forcing us to restrict to this situation, but a vague idea on how to deal with the general case of the conjecture seems visible. Another, more conceptual, reason why one would look at $a=|\A|-2$ and 3 variables, comes from hyperplane arrangement community where there is an interest into looking at special fibers of ideals generated by products of linear forms indexed by the independent sets of the matroid of the hyperplane arrangement; in $\mathbb P^2$, any two lines of a line arrangement are independent. So, in Theorem \ref{fibertype} we determine a set of generators for the defining ideal of the special fiber of $I$ (also known as the Orlik-Terao algebra of the second order, see \cite{To3}), and in Theorem \ref{fiber-type} we show that $I$ is of fiber type. Both these results answer affirmatively the two related conjectures stated in \cite{To3}, for the case $k=3$.

In the last section, we study $\mathcal{A}=\{H_1,\ldots,H_s\}$  a collection of $s\geq N+1$ distinct hyperplanes in $\mathbb P^N$ (so $N=k-1$). We assume these hyperplanes meet {\it properly}, by which we mean that the intersection of any $j$ of these hyperplanes is either empty or has codimension $j$. For any $1\leq c\leq N$, the star configuration of codimension~$c$ is defined as the union of the codimension $c$ linear varieties defined by all the intersections of these hyperplanes, taken $c$ at a time. We prove several conjectures of symbolic powers for defining ideals of such configurations.  These results are  applications of \cite[Theorem 2.3 and Theorem 3.2]{ToXi}, but since we didn't present them there, and because our current Theorem \ref{theorem1} generalizes \cite[Theorem 2.3 and Theorem 3.2]{ToXi}, we are including them here. It will be interesting to discuss various results and conjectures concerning symbolic powers of ideals that generalize defining ideals of star configurations. Also, these ideals need to be saturated. One instance when this happens is the case of ideals generated by $(s-N+1)$-fold products of linear forms, with the condition that any $N$ of the linear forms in $\Sigma$ are linearly independent (see \cite[Proposition 2.1]{To2}); if any $N+1$ linear forms are linearly independent the same ideal defines a zero-dimensional star configuration in $\mathbb P^{N}$. Our Theorem \ref{theorem1} will be helpful to show parts of the proofs of such results, but to complete such arguments one needs to have knowledge of the $\alpha$-invariant of the symbolic powers: for  star configurations, \cite[Lemma 8.4.7]{BRHKKSS09}, \cite[Lemma 2.4.1]{BoHa}, and \cite[Corollary 4.6]{GeHaMi},  and for $N=2$, \cite[Proposition 3.2]{ToXi2} present very useful lower bounds. In future work we will tackle more general cases.

\bigskip

\section{Ideals generated by $a$-fold products of linear forms}

\medskip

Let $\Sigma=(\underbrace{\ell_1,\ldots,\ell_1}_{m_1},\ldots,\underbrace{\ell_s,\ldots,\ell_s}_{m_s})$ be a collection of linear forms in $R:=\mathbb K[x_1,\ldots,x_k]$, with $\gcd(\ell_i,\ell_j)=1$ if $i\neq j$. The {\em support of $\Sigma$} is ${\rm Supp}(\Sigma):=\{\ell_1,\ldots,\ell_s\}$. Assume ${\rm rk}(\Sigma)=k$. Let $n:=m_1+\cdots+m_s$.

Let $1\leq a\leq n$, and consider $I_a(\Sigma)$, or $I_a(\ell_1^{m_1}\cdots\ell_s^{m_s})$, the ideal generated by (all) $a$-fold products of linear forms.

\medskip

Next are a couple of observations that will help with understanding the notations used, and how we look at prime ideals containing $I_a(\Sigma)$.

\begin{lem}\label{primes} Let $P\subset R$ be a prime ideal. Then $I_a(\Sigma)\subseteq P$ if and only if there are at least $n-a+1$ elements of $\Sigma$ (counted with multiplicity) that belong also to $P$.
\end{lem}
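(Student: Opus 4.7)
The plan is to prove both directions by a straightforward counting argument, invoking primality of $P$ only in the harder direction.

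For the ``$\Leftarrow$'' direction, suppose a sub-collection of $\Sigma$ of size at least $n-a+1$ (counted with multiplicity) lies in $P$; call the corresponding set of indices $S\subseteq \{1,\ldots,n\}$ with $|S|\geq n-a+1$. For any standard generator $\ell_{i_1}\cdots\ell_{i_a}$ of $I_a(\Sigma)$, the index set $\{i_1,\ldots,i_a\}$ cannot be disjoint from $S$ since $|\{1,\ldots,n\}\setminus S|\leq a-1 < a$. Hence some $\ell_{i_j}$ lies in $P$, and thus so does the product. Since every standard generator is in $P$, we conclude $I_a(\Sigma)\subseteq P$.

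For the ``$\Rightarrow$'' direction, I would argue the contrapositive. Suppose strictly fewer than $n-a+1$ elements of $\Sigma$ (counted with multiplicity) lie in $P$; equivalently, at least $a$ of the $\ell_i$ (counted with multiplicity) lie outside $P$. Pick $a$ such indices $1\leq i_1<\cdots<i_a\leq n$ with $\ell_{i_j}\notin P$ for each $j$. Then $\ell_{i_1}\cdots\ell_{i_a}$ is a standard generator of $I_a(\Sigma)$, yet by primality of $P$ this product is not in $P$. Therefore $I_a(\Sigma)\not\subseteq P$.

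Both directions are essentially immediate from the definition of $I_a(\Sigma)$ and the definition of a prime ideal; the only subtlety to be careful about is the bookkeeping of multiplicities, which is handled uniformly by treating $\Sigma$ as indexed by $\{1,\ldots,n\}$ rather than as a set of linear forms. Accordingly, I do not anticipate any serious obstacle in writing up this lemma.
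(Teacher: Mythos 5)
Your proof is correct and uses essentially the same elementary counting-plus-primality argument as the paper. The only difference is that you establish the forward direction by contrapositive (pick $a$ forms outside $P$ and note their product, a standard generator, is not in $P$), whereas the paper argues directly by iteratively extracting one factor in $P$ from a sequence of standard generators $\ell_j\cdots\ell_{j+a-1}$; your version is arguably the cleaner of the two since it avoids the implicit relabeling.
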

\begin{proof} This argument is presented in \cite{To}, but we repeat it here for complete exposition. For the purpose of this proof we go back to the notation at the beginning: $\Sigma=(\ell_1,\ldots,\ell_n)$.

Suppose $I_a(\Sigma)\subseteq P$. Then, the standard generator $\ell_1\ell_2\cdots\ell_a\in P$. So one of its factors must belong to $P$; say $\ell_1\in P$. Next we look at the standard generator $\ell_2\ell_3\cdots\ell_{a+1}\in P$. One of its factors must belong to $P$; say $\ell_2\in P$ (note that we don't exclude the possibility that $\ell_1$ and $\ell_2$ may be proportional). And so forth, we find $n-a+1$ linear forms from $\Sigma$ that belong to $P$.

For the converse, if $P$ has $n-a+1$ elements of $\Sigma$, every standard generator must have one of these linear forms as a factor, and hence it belongs to $P$, giving that $I_a(\Sigma)\subseteq P$.
\end{proof}

\medskip

For any prime $P\subset R$, define {\em the closure of $P$ in $\Sigma$} to be the subcollection of linear forms of $\Sigma$ (taken accordingly with their multiplicity) that belong also to $P$:

$${\rm cl}_{\Sigma}(P):=\{\ell\in\Sigma|\ell\in P\}.$$ Also define $\nu_{\Sigma}(P):=|{\rm cl}_{\Sigma}(P)|$.

We want to look at prime ideals generated by subsets of linear forms from $\Sigma$; denote the set of all such linear primes by $\Gamma(\Sigma)$.

Let $\frak q\in \Gamma(\Sigma)$. Then $\nu_{\Sigma}(\frak q)$ is the number of linear forms from $\Sigma$, counted with multiplicity, that belong to $\frak q$. For example, the irrelevant maximal ideal $\frak m:=\langle x_1,\ldots,x_k\rangle$ is the only element of $\Gamma(\Sigma)$ of height equal to $k$, because ${\rm rank}(\Sigma)=k$; hence $\frak m=\langle \ell_1,\ldots,\ell_s\rangle$ and $\nu_{\Sigma}(\frak m)=m_1+\cdots+m_s=n$.

If $\frak q\in \Gamma(\Sigma)$ with ${\rm ht}(\frak q)=c\leq k-1$, then there exists $c$ linearly independent linear forms of ${\rm Supp}(\Sigma)$, say $\ell_{i_1},\ldots,\ell_{i_c}$, such that $\frak q=\langle \ell_{i_1},\ldots,\ell_{i_c}\rangle$, with possibly $${\rm cl}_{\Sigma}(\frak q)= (\underbrace{\ell_{i_1},\ldots,\ell_{i_1}}_{m_{i_1}},\ldots,\underbrace{\ell_{i_c},\ldots,\ell_{i_c}}_{m_{i_c}}, \underbrace{\ell_{i_{c+1}},\ldots,\ell_{i_{c+1}}}_{m_{i_{c+1}}}, \ldots,\underbrace{\ell_{i_u},\ldots,\ell_{i_u}}_{m_{i_u}}).$$ So $\nu_{\Sigma}(\frak q)=m_{i_1}+\cdots+m_{i_u}$. Note that in \cite{ToXi}, since ${\rm Supp}(\Sigma)$ is generic, $u=c$.

Since $n=m_1+\cdots+m_s$, following the notations in \cite{ToXi}, $\mu(i_1,\ldots,i_u)=a-(n-\nu_{\Sigma}(\frak q))$, which is maximal possible, since we cannot have more than $\nu_{\Sigma}(\frak q)$ linear forms that generate $\frak q$.

By \cite[Lemma 2.1]{ToXi}, we have that
$$
I_a(\Sigma)\subseteq \bigcap_{\frak p\in \Gamma(\Sigma)}\frak p^{a-n+\nu_{\Sigma}(\frak p)},
$$
where  $\mathfrak{p}^{a-n+\nu_{\Sigma}(\mathfrak{p})}$ is replaced by $R$ if the power $a-n+\nu_{\Sigma}(\mathfrak{p})\leq 0$. By Lemma \ref{primes}, observe $a-n+\nu_{\Sigma}(\frak p)\leq 0$ if and only if $\frak p \nsupseteq I_a(\Sigma)$.

\medskip

\begin{thm} \label{theorem1} Let $\Sigma=(\underbrace{\ell_1,\ldots,\ell_1}_{m_1},\ldots,\underbrace{\ell_s,\ldots,\ell_s}_{m_s})$ be a collection of linear forms in $R:=\mathbb K[x_1,\ldots,x_k]$, with $\gcd(\ell_i,\ell_j)=1$ if $i\neq j$. Denote $n=|\Sigma|$ and $1\leq a\leq n$. Then the ideal $I_a(\Sigma)$
has linear graded free resolution and the following primary decomposition
$$
I_a(\Sigma)= \bigcap_{\frak p\in \Gamma(\Sigma)}\frak p^{a-n+\nu_{\Sigma}(\frak p)}.
$$
\end{thm}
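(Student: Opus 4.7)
The plan is to prove both statements simultaneously by induction on $n=|\Sigma|$, using the primary decomposition to control the relevant colon ideal during the induction step. The base cases $a=1$ and $a=n$ are immediate (after unpacking what $I_a(\Sigma)$ and its primary decomposition mean in those cases), so it suffices to treat $2\leq a\leq n-1$. Fix any linear form $\ell\in\Sigma$, set $\Sigma':=\Sigma\setminus\{\ell\}$, and consider the short exact sequence
$$0\to \bigl(R/(I_a(\Sigma):\ell)\bigr)(-1)\arrow{\cdot\ell} R/I_a(\Sigma)\to R/(I_a(\Sigma)+\langle\ell\rangle)\to 0.$$

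The first key step is to identify $I_a(\Sigma):\ell=I_{a-1}(\Sigma')$. The inclusion ``$\supseteq$'' is immediate since $\ell\cdot I_{a-1}(\Sigma')\subseteq I_a(\Sigma)$. For ``$\subseteq$'', combine the upper bound $I_a(\Sigma)\subseteq J:=\bigcap_{\frak p\in\Gamma(\Sigma)}\frak p^{a-n+\nu_\Sigma(\frak p)}$ of \cite[Lemma 2.1]{ToXi} with the elementary fact that for a linear prime $\frak p$ the associated graded ring is a polynomial ring, so $\frak p^e:\ell=\frak p^{e-1}$ when $\ell\in\frak p$ and $\frak p^e:\ell=\frak p^e$ otherwise. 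Distributing the colon over the intersection yields $J:\ell=\bigcap_{\frak p\in\Gamma(\Sigma)}\frak p^{(a-1)-(n-1)+\nu_{\Sigma'}(\frak p)}$, which is contained in $\bigcap_{\frak p\in\Gamma(\Sigma')}\frak p^{(a-1)-(n-1)+\nu_{\Sigma'}(\frak p)}=I_{a-1}(\Sigma')$ by the induction hypothesis applied to $\Sigma'$ with parameter $a-1$.

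The second step treats the quotient: modulo $\ell$, every standard generator of $I_a(\Sigma)$ containing $\ell$ vanishes, so $I_a(\Sigma)+\langle\ell\rangle=I_a(\Sigma^*)+\langle\ell\rangle$, where $\Sigma^*$ is obtained from $\Sigma$ by deleting all copies of $\ell$. Passing to $\bar R:=R/\ell R$, the images of $\Sigma^*$ form a collection $\bar\Sigma^*$ in the polynomial ring $\bar R$, of size at most $|\Sigma^*|<n$ (possibly with multiplicities merged, since distinct forms from $\Sigma^*$ may become proportional modulo $\ell$), and the induction hypothesis in $\bar R$ gives linear resolution for $\bar R/I_a(\bar\Sigma^*)$. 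A standard change-of-rings argument using the Koszul resolution $0\to R(-1)\to R\to\bar R\to 0$ preserves linearity, so ${\rm reg}(R/(I_a(\Sigma)+\langle\ell\rangle))=a-1$. Together with the left-hand term having regularity $a-1$ after the degree shift, the standard regularity inequality for short exact sequences gives ${\rm reg}(R/I_a(\Sigma))\leq a-1$, hence linear graded free resolution.

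For the primary decomposition, apply the elementary criterion that $I=J$ whenever $I\subseteq J$, $I:\ell=J:\ell$, and $I+\langle\ell\rangle=J+\langle\ell\rangle$. The first two hold for $I=I_a(\Sigma)$ by the preceding steps. For the third, one computes $J+\langle\ell\rangle$ and matches it with $I_a(\bar\Sigma^*)+\langle\ell\rangle$, again via the IH in $\bar R$; alternatively, with the linear resolution now established, Remark \ref{remark0} yields $I_a(\Sigma)=I_a(\Sigma)^{\rm sat}\cap\frak m^a$, and one identifies $I_a(\Sigma)^{\rm sat}$ with $\bigcap_{\frak p\neq\frak m}\frak p^{a-n+\nu_\Sigma(\frak p)}$ by a local check at each relevant prime. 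The principal obstacle is this last reconciliation under the quotient by $\ell$: primes in $\Gamma(\Sigma)$ whose generation essentially requires $\ell$ must be shown not to impose new constraints on $J+\langle\ell\rangle$ beyond those from primes that descend to $\Gamma(\bar\Sigma^*)$, which is precisely where running both assertions in tandem does its work.
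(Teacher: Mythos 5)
Your architecture matches the paper's almost step for step: the colon identity $I_a(\Sigma):\ell=I_{a-1}(\Sigma')$ obtained by distributing the colon over the upper bound $J=\bigcap_{\frak p\in\Gamma(\Sigma)}\frak p^{a-n+\nu_\Sigma(\frak p)}$ and invoking the inductive primary decomposition for $\Sigma'$; the quotient analysis passing to $\bar R=R/\ell R$ and the smaller collection there; the short exact sequence plus the regularity inequality; and then Remark~\ref{remark0} together with a localization argument for the primary decomposition. Using $n$ alone as the induction variable (rather than the paper's pair $(|\Sigma|,{\rm rk}(\Sigma))$) is actually fine, because even the local check reduces $n$: any prime of height $\leq k-1$ omits at least one $\ell_i$ of the support, so ${\rm cl}_\Sigma(Q)$ has strictly fewer elements than $\Sigma$.

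The genuine gap is exactly where you flag ``the principal obstacle'': neither route you sketch for the primary decomposition is actually carried out, and that is where most of the work in this theorem lies. The paper's local check is not a routine verification. For a prime $Q$ with ${\rm ht}(Q)\leq k-1$ and $\frak q=\langle{\rm cl}_\Sigma(Q)\rangle$, it requires the peeling identity
$$I_a(\ell_1^{m_1}\cdots\ell_s^{m_s})=\sum_{j=0}^{m_s}\ell_s^{\,j}\,I_{a-j}(\ell_1^{m_1}\cdots\ell_{s-1}^{m_{s-1}}),$$
iterated over all $\ell_i\notin Q$ (which become units in $R_Q$) to obtain $I_a(\Sigma)R_Q=I_{a-n+\nu_\Sigma(\frak q)}({\rm cl}_\Sigma(Q))R_Q$, followed by a careful comparison of $\Gamma(\Sigma)$ against $\Gamma({\rm cl}_\Sigma(Q))$ under the localization before the inductive hypothesis can be applied. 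Your alternative route via the criterion ``$I\subseteq J$, $I:\ell=J:\ell$, $I+\langle\ell\rangle=J+\langle\ell\rangle\Rightarrow I=J$'' (which is a correct criterion) can also be pushed through, but the third equality needs the same kind of bookkeeping: one must lift each $\bar{\frak p}\in\Gamma(\bar\Sigma^*)$ to the prime $\frak p\in\Gamma(\Sigma)$ containing $\ell$, check that $\nu_\Sigma(\frak p)=m+\nu_{\bar\Sigma^*}(\bar{\frak p})$ so the exponents agree, and use $\overline{\frak p^e}=\bar{\frak p}^{\,e}$. Until one of these is written out, the primary decomposition half of the theorem is asserted rather than proved.
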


\begin{proof} After a change of variables, and possibly embedding in a smaller ring, we may suppose ${\rm rk}(\Sigma)=k$.

We will prove the result by induction on the pairs $(|\Sigma|,{\rm rk}(\Sigma))$, with $|\Sigma|\geq {\rm rk}(\Sigma)\geq 2$.

\medskip

{\bf Base cases.} If ${\rm rk}(\Sigma)=2$, then, modulo an embedding, \cite[Theorem 2.2]{To3} gives the linear free resolution part. Then, via Remark \ref{remark0}, one must show that $$I_a(\Sigma)^{\rm sat}=\bigcap_{i=1}^s\langle l_i\rangle^{a-n+m_i}.$$ But this is true from \cite[Proposition 2.3]{AnGaTo}.

If $|\Sigma|={\rm rk}(\Sigma)$, then after a change of variables we can assume that $\Sigma=(x_1,\ldots,x_k)$. This is the Boolean arrangement, which leads to a particular case of star configurations, and therefore $I_a(\Sigma)$ has graded linear free resolution and the primary decomposition for any $1\leq a\leq |\Sigma|$ (see part (3) in the Introduction of \cite{To3} or see \cite{ToXi}).

\medskip

{\bf Inductive step.} Suppose $|\Sigma|>{\rm rk}(\Sigma)\geq 3$.

Let $\ell\in\Sigma$ and $\Sigma':=\Sigma\setminus\{\ell\}$. Denote $n':=n-1=|\Sigma'|$. We will prove the following claim:

\bigskip

\noindent CLAIM: \,  $I_a(\Sigma):\ell=I_{a-1}(\Sigma')$.

\begin{proof} By  \cite[Lemma 2.1]{ToXi}, we have that
$$
I_a(\Sigma)\subseteq \bigcap_{\frak p\in \Gamma(\Sigma)}\frak p^{a-n+\nu_{\Sigma}(\frak p)}\subseteq \bigcap_{\frak p\in \Gamma(\Sigma^{\prime})}\frak p^{a-n+\nu_{\Sigma}(\frak p)}.
$$ The last inclusion comes from $\Gamma(\Sigma)\subseteq \Gamma(\Sigma')$, and therefore in the last term we intersect fewer elements.

Hence
$$
I_a(\Sigma):\ell\subseteq \bigcap_{\frak p\in \Gamma(\Sigma')}(\frak p^{a-n+\nu_{\Sigma}(\frak p)}:\ell).
$$

Now, let $\frak p\in\Gamma(\Sigma')$. Then, since $\frak p$ is a linear prime, for any power $u$, we have

$$\frak p^u:\ell = \left\{
  \begin{array}{ll}
    \frak p^u, & \hbox{if } \ell\notin\frak p, \\
    \frak p^{u-1}, & \hbox{if } \ell\in\frak p.
  \end{array}
\right.$$

\begin{itemize}
  \item[(1)] If $\ell\notin\frak p$, then $\nu_{\Sigma'}(\frak p)=\nu_{\Sigma}(\frak p)$. So $$\frak p^{a-n+\nu_{\Sigma}(\frak p)}:\ell= \frak p^{a-n+\nu_{\Sigma}(\frak p)}=\frak p^{(a-1)-n'+\nu_{\Sigma'}(\frak p)}.$$
  \item[(2)] If $\ell\in\frak p$, then $\nu_{\Sigma'}(\frak p)=\nu_{\Sigma}(\frak p)-1$, so
$$\frak p^{a-n+\nu_{\Sigma}(\frak p)}:\ell=\frak p^{a-n+\nu_{\Sigma}(\frak p)-1}=\frak p^{(a-1)-n'+\nu_{\Sigma'}(\frak p)}.$$
\end{itemize}

\medskip

Hence we have
$$
\bigcap_{\frak p\in \Gamma(\Sigma^{\prime})}(\frak p^{a-n+\nu_{\Sigma}(\frak p)}:\ell)=\bigcap_{\frak p\in \Gamma(\Sigma^{\prime})}\frak p^{(a-1)-n^{\prime}+\nu_{\Sigma^{\prime}}(\frak p)}.
$$

\medskip

By inductive hypotheses,  $I_{a-1}(\Sigma')$ has the following primary decomposition
$$
I_{a-1}(\Sigma')=\bigcap_{\frak p\in\Gamma(\Sigma')}\frak p^{(a-1)-n'+\nu_{\Sigma'}(\frak p)}.
$$

Everything put together gives $I_a(\Sigma):\ell\subseteq I_{a-1}(\Sigma')$. Since the reverse inclusion is obvious, we conclude the proof of the CLAIM.
\end{proof}

\bigskip

Now we have the short exact sequence of graded $R$-modules:

$$0\longrightarrow \frac{R(-1)}{I_{a-1}(\Sigma')}\longrightarrow \frac{R}{I_a(\Sigma)}\longrightarrow \frac{R}{\langle\ell, I_a(\Sigma)\rangle}\longrightarrow 0.$$

\medskip

By inductive hypotheses, $I_{a-1}(\Sigma')$ has linear graded free resolution and
$${\rm reg}\left(\frac{R(-1)}{I_{a-1}(\Sigma')}\right)=(a-2)+1=a-1.$$

\medskip

Next we deal with the rightmost nonzero module. We can suppose, after a change of variables, that $\ell=\ell_s=x_k$. Suppose that for all $1\leq i\leq s-1$, $\ell_i=\bar{\ell}_i+c_ix_k, c_i\in\mathbb K$, where $\bar{\ell}_i$ are linear forms in variables $x_1,\ldots,x_{k-1}$.

Consider
 $$\bar{\Sigma}:=(\underbrace{\bar{\ell}_1,\ldots,\bar{\ell}_1}_{m_1}, \ldots, \underbrace{\bar{\ell}_{s-1},\ldots,\bar{\ell}_{s-1}}_{m_{s-1}})\subset \bar{R}:=\mathbb K[x_1,\ldots,x_{k-1}].
 $$
 Since we assumed that ${\rm rk}(\Sigma)=k$, then ${\rm rk}(\bar{\Sigma})=k-1$. By inductive hypotheses, $I_a(\bar{\Sigma})$ has a linear graded free resolution and we have
 $${\rm reg}\left(\frac{\bar{R}}{I_a(\bar{\Sigma})}\right)=a-1.$$

\medskip

\noindent
But $R/\langle\ell,I_a(\Sigma)\rangle$ and $\bar{R}/I_a(\bar{\Sigma})$ are isomorphic as $R$-modules, so they have the same regularity (see \cite[Corollary 4.6]{Ei}).

\medskip

Applying the inequalities of regularity under short exact sequence (see \cite[Corollary 20.19 b.]{Ei2}), we can conclude that ${\rm reg}(R/I_a(\Sigma))\leq a-1$, and therefore that $I_a(\Sigma)$ has linear graded free resolution.

\bigskip

Now we are going to show
$$
I_a(\Sigma)= \bigcap_{\frak p\in \Gamma(\Sigma)}\frak p^{a-n+\nu_{\Sigma}(\frak p)}=\bigcap_{\frak p\in \Gamma(\Sigma)\setminus \{\frak m\}}\frak p^{a-n+\nu_{\Sigma}(\frak p)}\bigcap \frak m^a.
$$
Since $I_a(\Sigma)$ has linear graded free resolution, by Remark \ref{remark0}, we have $I_a(\Sigma)=I_a(\Sigma)^{\rm sat}\cap \frak m^a$.
So we just need to show
$$
I_a(\Sigma)^{\rm sat}=\bigcap_{\frak p\in \Gamma(\Sigma)\setminus \{\frak m\}}\frak p^{a-n+\nu_{\Sigma}(\frak p)}.
$$

Let $J=\bigcap_{\frak p\in\Gamma(\Sigma)\setminus \{\frak m\}}\frak p^{a-n+\nu_{\Sigma}(\frak p)}$.

To prove the above equality, we just need to show $I_a(\Sigma)$ and $J$ are the same after localizing at any prime ideal $Q$ of height $\leq k-1$.

Let $Q$ be such a prime ideal. Let $\frak q:=\langle {\rm cl}_{\Sigma}(Q)\rangle\in \Gamma(\Sigma)$. Obviously, $\ell\in \Sigma$ doesn't belong to $Q$ if and only if it doesn't belong to $\frak q$, hence $\nu_{\Sigma}(\frak q)=\nu_{\Sigma}(Q)$. Also, by Lemma \ref{primes}, $I_a(\Sigma)\subset Q$ if and only if $\nu_{\Sigma}(\frak q)\geq n-a+1$.

We also have ${\rm ht}(\frak q)=c\leq k-1$. We may assume $\frak q=\langle \ell_1, \ldots, \ell_c\rangle$. Let $\ell_{u+1}, \ldots \ell_s$ be all of the linear forms in ${\rm Supp}(\Sigma)$ that do not belong to $\frak{q}$ (and hence they do not belong to $Q$, meaning that they are invertible elements in $R_Q$). Then, by our discussions at the beginning of the section, $\nu_{\Sigma}(\frak q)=m_{1}+\cdots+m_{u}$.

Observe for any collection $(\ell_1,\ldots,\ell_s)$ of linear forms, and for any $1\leq a\leq n$, one has $$I_a(\Sigma)=I_a(\ell_1^{m_1}\cdots\ell_{s-1}^{m_{s-1}}\ell_s^{m_s})=\ell_s^{m_s}I_{a-m_s}(\ell_1^{m_1}\cdots\ell_{s-1}^{m_{s-1}})+ \cdots+\ell_s I_{a-1}(\ell_1^{m_1}\cdots\ell_{s-1}^{m_{s-1}})+I_a(\ell_1^{m_1}\cdots\ell_{s-1}^{m_{s-1}}).$$
Under localization at $Q$, $\ell_s$ is invertible, and since
$$I_{a-m_s}(\ell_1^{m_1}\cdots\ell_{s-1}^{m_{s-1}})\supset\cdots\supset I_a(\ell_1^{m_1}\cdots\ell_{s-1}^{m_{s-1}}),$$
we have $$I_a(\Sigma) R_Q=I_{a-m_s}(\ell_1^{m_1}\cdots\ell_{s-1}^{m_{s-1}}) R_Q.$$
Doing this for all $\ell_{u+1},\ldots,\ell_s$ we have
$$I_a(\Sigma) R_Q=I_{a-n+\nu_{\Sigma}(\frak q)}(\ell_1^{m_1}\cdots\ell_u^{m_u}) R_Q,$$
since $\nu_{\Sigma}(\frak q)=m_1+\cdots+m_u$.

Next we look at $$\Sigma':={\rm cl}_{\Sigma}(Q)=(\underbrace{\ell_1,\ldots,\ell_1}_{m_1},\ldots,\underbrace{\ell_u,\ldots,\ell_u}_{m_u})\subset \Sigma.$$ Since ${\rm rk} (\Sigma')={\rm ht}(\frak q)=c\leq k-1$, by induction hypothesis, we have

$$
I_{a-n+\nu_{\Sigma}(\frak q)}(\Sigma')= \bigcap_{\frak p \in \Gamma(\Sigma')}\frak p^{a-n+\nu_{\Sigma}(\frak q)-\nu_{\Sigma}(\frak q)+\nu_{\Sigma'}(\frak p)}=\bigcap_{\frak p\in \Gamma(\Sigma')}\frak p^{a-n+\nu_{\Sigma'}(\frak p)}.
$$

We have $$J R_Q = \left(\bigcap_{\frak p\in \Gamma(\Sigma)}\frak p^{a-n+\nu_{\Sigma}(\frak p)}\right) R_Q.$$ In this intersection, if $\frak p \nsubseteq Q$, then $\frak p R_Q = R_Q$, so we are interested only in $\frak p\subseteq Q$. But then, $\frak p\subseteq \frak q$, and hence $\frak p\in\Gamma(\Sigma')$ and $\nu_{\Sigma}(\frak p)=\nu_{\Sigma'}(\frak p)$.

Everything put together gives

$$
J R_Q = \left(\bigcap_{\frak p\in \Gamma(\Sigma')}\frak p^{a-n+\nu_{\Sigma'}(\frak p)}\right) R_Q=I_{a-n+\nu_{\Sigma}(\frak q)}(\Sigma')R_Q = I_a(\Sigma) R_Q.
$$
And we completed the proof. \end{proof}

\bigskip

\section{Rees algebras of line arrangements}

\medskip

In \cite[Section 3]{To3} there were several conjectures  concerning the generators of some elimination algebras, such as the special fiber, and now we can shed more light when the ambient ring is the polynomial ring in three variables.

\subsection{Basic definitions and concepts.} The \emph{Rees algebra} of an ideal $I$ in a ring $R$ is the homogeneous $R$-subalgebra of the standard graded polynomial $R[t]$ in one variable over $R$, generated by the elements $at, a\in I$, of degree $1$. Fixing a set of generators of $I$ determines a surjective homomorphism of $R$-algebras from a polynomial ring over $R$ to $R[It]$. The kernel of such a map is called a {\em presentation ideal} of $R[It]$. If $R=\mathbb K[x_1,\dots, x_k]$ is a standard graded polynomial ring over a field $\mathbb K$ and
$I=\langle g_1,\ldots, g_n\rangle$ is an ideal generated by forms $ g_1,\dots, g_n$ of the same degree, consider $T=R[y_1,\ldots,y_n]=\mathbb K[x_1,\dots, x_k; y_1,\ldots,y_n]$, a standard bigraded $\mathbb K$-algebra with $\deg x_i=(1,0)$ and $\deg y_j=(0,1)$.
Using the given generators to obtain an $R$-algebra homomorphism $$\varphi: T=R[y_1,\ldots,y_n]\longrightarrow R[It],\, y_i\mapsto g_it,$$
yields a presentation ideal $\mathcal{I}$ which is bihomogeneous in the bigrading of $T$.\footnote{Here we'll be talking about the presentation ideal of $R[It]$ by fixing a particular set of homogeneous generators of $I$ of the same degree.}

In this polynomial setup, one defines the \emph{special fiber}  $\fiber:=\rees\otimes_R R/{\fm}\simeq \oplus_{s\geq 0}I^s/{\fm}I^s$, where $\fm=\langle x_1,\ldots,x_k\rangle\subset R$. The Krull dimension of the special fiber $\ell(I):=\dim \fiber$ is called the {\em analytic spread} of $I$.

As noted before, the presentation ideal of $R[It]$
$$\mathcal I=\bigoplus_{(u,v)\in {\mathbb N} \times  {\mathbb N}} {\mathcal I}_{(u,v)},$$
is a bihomogeneous ideal in the standard bigrading of $T$. Two basic subideals of $\mathcal I$ are $\langle \mathcal I_{(0,-)}\rangle$ and $\langle \mathcal I_{(-,1)}\rangle$, and they have significant importance in the theory:
\begin{itemize}
  \item $\langle \mathcal I_{(0,-)}\rangle$ is the homogeneous defining ideal of the special fiber.
  \item $\langle \mathcal I_{(-,1)}\rangle$ coincides with the ideal of $T$  generated by the biforms $s_1y_1+\cdots+s_ny_n\in T$, whenever $(s_1,\ldots,s_n)$ is a syzygy of $g_1,\ldots,g_n$ of certain degree in $R$. Therefore, $T/\langle \mathcal I_{(-,1)}\rangle$ is a presentation of the symmetric algebra $\mathcal S (I)$ of $I$.
\end{itemize}

The ideal $I$ is said to be of {\em linear type} provided $\mathcal I=\langle \mathcal I_{(-,1)}\rangle$, and it is said to be of {\em fiber type} if $\mathcal I= \langle \mathcal I_{(-,1)}\rangle + \langle \mathcal I_{(0,-)}\rangle$.

\bigskip

Let $\A\subset\mathbb P^{k-1}$ be a rank $k$ arrangement of $s$ hyperplanes, defined by the linear forms $\ell_1,\ldots,\ell_s\in R:=\mathbb K[x_1,\ldots,x_k]$; in this case $\gcd(\ell_i,\ell_j)=1$ for all $i\neq j$. For $1\leq a\leq s$, consider the ideal $I:=I_a(\A)$. In terms of generators for $I$, we pick the standard generators $\ell_{i_1}\cdots\ell_{i_a}$, for all $1\leq i_1<\cdots<i_a\leq s$. Of course, this set of generators is not minimal. In fact, \cite[Proposition 2.10]{AnGaTo} gives the formula
$$\displaystyle \mu(I)=\sum_{u=0}^{\min\{k,s-a\}}c_{k-u,s-a-u},$$
where $c_{i,j}$ are coefficients occurring in the Tutte polynomial of the matroid of $\A$.

Denote $\mathcal I(\A,a)$ the presentation ideal of $R[It]$ for the above chosen set of generators.

\begin{rem} \label{gensRees} For any integer $e\geq 1$, we have $I^e=I_{ea}(\A(e))$, where $\A(e):=(\underbrace{\ell_1,\ldots,\ell_1}_{e},\ldots, \underbrace{\ell_s,\ldots,\ell_s}_{e})$. So, from Theorem \ref{theorem1}, we have that $I^e$ has linear graded free resolution. But this translates into $I$ having {\em linear powers}, which, by \cite[Theorem 2.5]{BrCoVa}, is equivalent to ${\rm reg}_{(1,0)}(R[It])=0$. In particular, what this means is that $\mathcal I(\A,a)$ doesn't have any minimal generators in bidegree $(u,v)$, with $u\geq 2$.
\end{rem}

There are several values of $a$ when we have some information about $\mathcal I(\A,a)$:
\begin{itemize}
  \item If we denote $\nu(\A)$ the maximal size of a coatom in the intersection latice of $\A$ (i.e., the maximum number of hyperplanes of $\A$ that intersect at a point), and if $1\leq a\leq s-\nu(\A)$, then $I=\langle x_1,\ldots,x_k\rangle^a$ (see \cite{To}), and the Rees algebra of $I$ is very well understood (see the discussions in \cite[Section 3.3]{To3}).
  \item The case $a=s-1$ has been treated extensively in \cite{GaSiTo}.
  \item When $a=s$, then $I$ is a principal ideal generated by $\ell_1\cdots\ell_s$.
\end{itemize}

This section is dedicated entirely to the next case which is $a=s-2$. Because a lot of calculations depend on the size of circuits in the matroid of $\A$, if we assume that the rank of $\A$ is 3, then any dependent set has size $\leq 4$ (i.e., any four of the defining linear forms are linearly dependent), and this eases up the computations quite a bit.

\subsection{The case $k=3$ and $a=s-2$.} Let $\mathcal A\subset\mathbb P^2$ be a line arrangement of rank 3, defined by linear forms $\ell_1,\ldots,\ell_s\in R:=\mathbb K[x,y,z]$. Consider $I:=I_{s-2}(\A)$, and denote $\mathcal I:=\mathcal I(\A,s-2)$.

\cite[Proposition 3.6]{To3} presents the generators for the symmetric ideal of $I$, i.e., ${\rm sym}(I):=\langle \mathcal I_{(-,1)}\rangle$. Also, in the same paper we started presenting the generators of the presentation ideal of the special fiber, i.e., $\partial(I):=\langle \mathcal I_{(0,-)}\rangle$. Next we review these results, and the notations.

Let $\displaystyle f_{i,j}:=\frac{\ell_1\cdots\ell_s}{\ell_i\ell_j}$, $1\leq i<j\leq s$ be the (standard) generators of $I$. By \cite[Theorem 2.4]{To3} we have $\displaystyle \mu(I)={{s}\choose{2}}-\sum_{j=1}^t{{{n_j}-1}\choose{2}}$, where $Sing(\A):=\{P_1,\ldots,P_t\}$ is the set of all intersection points of the lines of $\A$, and $n_j$ is the number of lines of $\A$ intersecting at the point $P_j$ (indeed, $n_j=\nu_{\A}(I(P_j))$).
The Rees ideal, $\mathcal I$, corresponding to this set of generators is the kernel of the map:
$$T:=R[\ldots, t_{i,j},\ldots]\longrightarrow R[It],\, t_{i,j}\mapsto f_{i,j}t.$$

\medskip

Recall \cite[Lemma 3.2 and Proposition 3.6]{To3} give the following information about important elements in $\mathcal I$.

\begin{itemize}
  \item[(I)] If $\{i_1,i_2,i_3\}, 1\leq i_1<i_2<i_3\leq s$, is a circuit in the matroid of $\mathcal A$, then it gives a dependency $c_{i_1}\ell_{i_1}+c_{i_2}\ell_{i_2}+c_{i_3}\ell_{i_3}=0$. This in turn gives the following element of $\mathcal I$:
      $$L_{i_1,i_2,i_3}:=c_{i_1}t_{i_2,i_3}+c_{i_2}t_{i_1,i_3}+c_{i_3}t_{i_1,i_2}.$$
  \item[(II)] For any $1\leq a<b<c\leq s$ the followings are elements of $\mathcal I$:
  \begin{eqnarray}
  A_{a,b,c}&:=&\ell_at_{a,b}-\ell_ct_{b,c},\nonumber\\
  B_{a,b,c}&:=&\ell_at_{a,c}-\ell_bt_{b,c},\nonumber\\
  C_{a,b,c}&:=&\ell_bt_{a,b}-\ell_ct_{a,c}.\nonumber
  \end{eqnarray}
  \item[(III)] If $s\geq 4$, for any $1\leq a<b<c<d\leq s$, the followings are elements of $\mathcal I$:
  \begin{eqnarray}
  Q^1_{a,b,c,d}:=t_{a,b}t_{c,d}-t_{a,c}t_{b,d},\nonumber\\
  Q^2_{a,b,c,d}:=t_{a,b}t_{c,d}-t_{a,d}t_{b,c}.\nonumber
  \end{eqnarray}
\end{itemize}
We have that ${\rm sym}(I)$ is generated by the sets (I) and (II). Also the sets (I) and (III) belong to $\partial(I)$.

There is another set of elements that belong to $\partial(I)$. Since the rank of $\A$ is 3, then any four of the defining linear forms are linearly dependent. By (I), we are left to analyze the circuits of size four; for example $\{1,2,3,4\}$, where any subset of three elements of this circuit is independent. This circuit comes with the linear dependency $d_1\ell_1+d_2\ell_2+d_3\ell_3+d_4\ell_4=0$, where all the coefficients are not zero. Now we follow the ideas in \cite{To3}, on how to obtain elements of $\partial(I)$ from the elements of the Orlik-Terao ideal. Our circuit of size four leads to the following element of the Orlik-Terao ideal
$$G:=d_1y_2y_3y_4+d_2y_1y_3y_4+d_3y_1y_2y_4+d_4y_1y_2y_3\in S:=\mathbb K[y_1,\ldots,y_s].$$ Multiplying this by any $y_k$, after pairing two $y$'s with different indices, via the preimage of the map in \cite[Proposition 3.3]{To3}, we obtain, modulo elements of type (III), the following elements of $\partial(I)$:
\begin{eqnarray}
P^1_{1,2,3,4}&:=&d_1t_{1,2}t_{3,4}+d_2t_{1,3}t_{1,4}+d_3t_{1,2}t_{1,4} +d_4t_{1,2}t_{1,3},\nonumber\\
P^2_{1,2,3,4}&:=&d_1t_{2,3}t_{2,4}+d_2t_{1,2}t_{3,4}+d_3t_{1,2}t_{2,4} +d_4t_{1,2}t_{2,3},\nonumber\\
P^3_{1,2,3,4}&:=&d_1t_{2,3}t_{3,4}+d_2t_{1,3}t_{3,4}+d_3t_{1,2}t_{3,4}+d_4t_{1,3}t_{2,3},\nonumber\\
P^4_{1,2,3,4}&:=&d_1t_{2,4}t_{3,4}+d_2t_{1,4}t_{3,4}+d_3t_{1,4}t_{2,4}+d_4t_{1,2}t_{3,4},\nonumber\\
R^k_{1,2,3,4}&:=&d_1t_{2,3}t_{4,k}+d_2t_{1,3}t_{4,k}+d_3t_{1,2}t_{4,k}+d_4t_{1,2}t_{3,k}, 5\leq k\leq s.\nonumber
\end{eqnarray}
Denote the set of all these elements with (IV), for all circuits $\{j_1,j_2,j_3,j_4\}, 1\leq j_1<j_2<j_3<j_4\leq s$.

\begin{rem} \label{remark1} In \cite[Subsection 3.2.1]{To3} it is shown how one can obtain canonically (i.e., via Sylvester forms) the elements of types (I) and (III), and some elements of type (IV), from elements of type (II). But after modulo elements of type (III), we also have $t_{2,3}P^1_{1,2,3,4}=t_{1,3}P^2_{1,2,3,4}$ and $t_{4,k}P^1_{1,2,3,4}=t_{1,4}R^k_{1,2,3,4}$. So via Sylvester forms we can obtain all elements of (IV) from elements of type (II).

In \cite[Example 3.5]{To3} it is obtained a minimal generator of $\partial(I)$ that is not of any of the types (I), (III), nor (IV): $$F:=t_{2,4}t_{3,4}+t_{1,4}t_{3,4}-t_{1,4}t_{2,4}.$$ That generator was obtained from the dependency $$1\cdot\ell_1+1\cdot\ell_2+(-1)\cdot\ell_3+0\cdot\ell_4=0.$$ This gave the element of the Orlik-Terao ideal $G:=y_2y_3y_4+y_1y_3y_4-y_1y_2y_4$ (observe that we will not simplify by $y_4$; we could simplify if we want because $\partial(I)$ is a prime ideal not containing the variables, and we would get the standard minimal generator of the Orlik-Terao ideal corresponding to the circuit $\{1,2,3\}$). Multiplying $G$ in order by the variables $y_1, y_2, y_3, y_4$, modulo (III) we obtain:
\begin{eqnarray}
P^1_{1,2,3,4}&=& t_{1,4}L_{1,2,3},\nonumber\\
P^2_{1,2,3,4}&=& t_{2,4}L_{1,2,3},\nonumber\\
P^3_{1,2,3,4}&=& t_{3,4}L_{1,2,3},\nonumber\\
P^4_{1,2,3,4}&=&F.\nonumber
\end{eqnarray}

From now on we can include the elements similar to $F$ into the type (IV) ones, by allowing the set $\{1,2,3,4\}$ to be dependent (not necessarily minimally dependent, i.e., a circuit). To sum up, below we show how via Sylvester forms we can obtain all the elements of types (I), (III), and (IV), from elements of type (II).

Suppose we have the dependency $a_1\ell_1+a_2\ell_2+a_3\ell_3+a_4\ell_4=0$, where $a_1,a_2,a_3\neq 0$, but $a_4$ may equal to zero, with $\ell_1,\ell_2,\ell_4$ linearly independent. Suppose also that $a_3=-1$. Consider the following elements of type (II):
$$A_{1,2,4}=\ell_1t_{1,2}-\ell_4t_{2,4}, B_{1,2,4}=\ell_1t_{1,4}-\ell_2t_{2,4}, A_{1,2,3}=\ell_1t_{1,2}-(a_1\ell_1+a_2\ell_2+a_4\ell_4)t_{2,3}.$$
We have
$$\left[\begin{array}{l} A_{1,2,4}\\ B_{1,2,4}\\A_{1,2,3}
\end{array}\right]=\left[
\begin{array}{ccc}
t_{1,2}&0&-t_{2,4}\\
t_{1,4}&-t_{2,4}&0\\
t_{1,2}-a_1t_{2,3}&-a_2t_{2,3}&-a_4t_{2,3}
\end{array}
\right]\cdot \left[\begin{array}{l} \ell_1\\ \ell_2\\\ell_4
\end{array}\right].$$
Taking the determinant of the $3\times 3$ content matrix we obtain
$$t_{2,4}[a_1t_{2,3}t_{2,4}+a_2t_{2,3}t_{1,4}-t_{1,2}t_{2,4}+a_4t_{1,2}t_{2,3}]=t_{2,4}[P^2_{1,2,3,4}-a_2Q^2_{1,2,3,4}]\in \mathcal I.$$ By primality, $P^2_{1,2,3,4}\in\partial(I)$. Furthermore, if $a_4=0$, then $P^2_{1,2,3,4}=t_{2,4}L_{1,2,3}$, and so $L_{1,2,3}\in\partial(I)$. Since any other element of type (IV) corresponding to the dependent set $\{1,2,3,4\}$ can be obtained from $P^2_{1,2,3,4}$, the conclusion follows.
\end{rem}

\medskip

As we observed above, for any $1\leq j_1<j_2<j_3<j_4\leq s$, the (not necessarily minimal) dependent set $\{j_1,j_2,j_3,j_4\}$ leads to the canonical construction of elements of type (I), (III), (IV) of $\partial(I)$. The question is if there are any other elements of $\partial(I)$ that cannot be generated by elements of type (I), (III), and (IV). We claim that there aren't any.

\begin{rem}\label{exp-restrictions} As it is explained in \cite{To3}, any generator of $\partial(I)$ is obtained by pairing variables $y$'s with different indices in $$M\cdot (d_{j_1}y_{j_2}y_{j_3}y_{j_4}+d_{j_2}y_{j_1}y_{j_3}y_{j_4}+d_{j_3}y_{j_1}y_{j_2}y_{j_4}+d_{j_4}y_{j_1}y_{j_2}y_{j_3})=:MG_{j_1,\ldots,j_4},$$ where $d_{j_1}\ell_{j_1}+d_{j_2}\ell_{j_2}+d_{j_3}\ell_{j_3}+d_{j_4}\ell_{j_4}=0$, and $M$ is a monomial in $S:=\mathbb K[y_1,\ldots,y_s]$ of odd degree.

We can suppose $j_i=i$ for $i=1,\ldots,4.$ Let $M=y_{1}^{m_1}\cdots y_{n}^{m_n}$, where $m_1+\cdots +m_n =m$. Since we need to pair $y_i$'s in $MG_{1,\ldots,4}$, we have a natural restriction about the degrees: $m_a \leq$ than the sum of the exponents of all the other variables in that term if the variable $y_a$ shows up in a term of $MG_{1,\ldots,4}$, consequently we have $$2m_l\leq m+1,~\mathrm{for}~l=1,\ldots,4 ~\mathrm{and}~2m_l\leq m+3,~\mathrm{for}~l=5,\ldots,n.$$
A monomial $M$ satisfying these inequalities will be said to satisfy the {\em exponents restrictions}; and the pairings of the variables $y_i$'s that pull back to an element of $\partial(I)$ will be called {\em valid pairings}.
\end{rem}

\medskip

\begin{thm} \label{fibertype} Using the above notations, $\partial(I)$ is generated by elements of types (I), (III), and (IV).
\end{thm}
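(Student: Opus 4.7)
The plan is to combine Remark~\ref{exp-restrictions} with an induction on $m:=\deg M$: by that remark, every minimal generator of $\partial(I)$ of $t$-degree at least $2$ arises as $\Psi(MG_{j_1,\ldots,j_4})$ for some dependent $4$-subset $\{j_1,\ldots,j_4\}$, some monomial $M\in S$ of odd degree satisfying the exponents restrictions, and some valid pairing $\Psi$ of the $y$-variables into $t_{i,j}$'s. Degree-$1$ generators are exactly the type (I) elements, so, writing $J:=\langle\text{(I),(III),(IV)}\rangle$, it suffices to show $\Psi(MG_{j_1,\ldots,j_4})\in J$ for every admissible $M$. The key enabling observation is that, modulo type (III), the element $\Psi(MG_{j_1,\ldots,j_4})$ depends only on the polynomial $MG_{j_1,\ldots,j_4}\in S$ and not on the chosen pairing, because the quadratic relations $Q^1,Q^2$ encode precisely the local pairing swaps $(a,b)(c,d)\leftrightarrow(a,c)(b,d)\leftrightarrow(a,d)(b,c)$. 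Hence at every step we may pick any convenient pairing.

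The base case $m=1$ is immediate: $M=y_k$ yields either a $P^i_{j_1,\ldots,j_4}$ (if $k\in\{j_1,\ldots,j_4\}$) or an $R^k_{j_1,\ldots,j_4}$ (otherwise), both of type (IV). For the inductive step $m\geq 3$, first suppose $M$ involves at least two distinct variables; write $M=y_ay_bM''$ with $a\neq b$ and $\deg M''=m-2$ odd. A direct check verifies that $M''$ meets the exponents restrictions at level $m-2$, and that one may pair a chosen copy of $y_a$ with a chosen copy of $y_b$ inside every term of $MG_{j_1,\ldots,j_4}$, yielding
$$\Psi(MG_{j_1,\ldots,j_4})\equiv t_{a,b}\cdot\Psi(M''G_{j_1,\ldots,j_4})\pmod{\text{(III)}};$$
induction concludes this sub-case.

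The only remaining possibility is $M=y_a^m$ a pure power; the exponents restrictions force $a\notin\{j_1,\ldots,j_4\}$ and $m=3$, so the forced pairing gives
$$\Psi(y_a^3G_{j_1,\ldots,j_4})=\sum_{i=1}^{4}d_{j_i}\prod_{k\neq i}t_{a,j_k}.$$
Since ${\rm rk}(\mathcal{A})=3$, for each $i$ the $4$-set $\{a\}\cup(\{j_1,\ldots,j_4\}\setminus\{j_i\})$ is dependent, say via $\ell_a=\sum_{k\neq i}\gamma_k^{(i)}\ell_{j_k}$, producing a type (IV) element $\widetilde{P}_i\in J$ whose leading monomials are (up to sign) the $t_{a,j_k}t_{a,j_l}$ with $k,l\neq i$, together with one ``mixed'' monomial of multiset $\{a,j_1,j_2,j_3,j_4\}$. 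The plan is to exhibit scalars $c_1,\ldots,c_4$ so that $\sum_ic_i\,t_{a,j_i}\widetilde{P}_i$ realises the target modulo (III). Matching the coefficient of $\prod_{k\neq j}t_{a,j_k}$ produces the linear system $\sum_{i\neq j}c_i\gamma_j^{(i)}=-d_{j_j}$ for each $j$; picking any global expression $\ell_a=\sum_k\alpha_k\ell_{j_k}$ gives $\gamma_k^{(i)}=\alpha_k-\alpha_id_{j_k}/d_{j_i}$, after which the system collapses to the two scalar conditions $\sum c_i=0$ and $\sum c_i\alpha_i/d_{j_i}=1$, which are jointly solvable (inconsistency would force $\ell_a=0$). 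Under $\sum c_i=0$, the four mixed contributions share the common multiset $\{a,a,j_1,j_2,j_3,j_4\}$ and hence cancel modulo (III).

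The main obstacle is precisely this final case $M=y_a^3$: constructing the auxiliary type (IV) elements $\widetilde{P}_i$ from the four forced dependencies, solving the resulting $4\times 4$ linear system (which reduces to rank $2$ exactly because of the compatibility relation between the $\gamma_k^{(i)}$ and the original $d_{j_k}$), and verifying cancellation of the residual mixed monomials using (III). The rest of the argument is a routine induction whose reductions are clean consequences of factoring a $t_{a,b}$ (with $a\neq b$) out of pairings.
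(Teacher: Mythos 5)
Your overall strategy (reduce $\deg M$ by $2$ via factoring out a $t_{a,b}$, with the pure-power $y_a^3$ handled separately) matches the paper's, but the crucial step ``A direct check verifies that $M''$ meets the exponents restrictions at level $m-2$'' conceals a genuine gap: it is \emph{false} for arbitrary $a\neq b$. Take $M=y_1^3y_2y_3$ with $m=5$; this satisfies the restrictions ($2m_1=6=m+1$), but $M''=M/(y_2y_3)=y_1^3$ violates them at level $m'=3$ since $2m'_1=6>m'+1=4$. One must choose the pair $\{a,b\}$ so that it contains the ``boundary'' index, and one must further verify that this particular pair \emph{can} be matched in every term of $MG_{j_1,\ldots,j_4}$ (modulo type (III) rewiring). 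Establishing that a compatible choice always exists is the technical core of the paper's argument, which is precisely what Cases~1--6 in the paper's proof accomplish: Cases 2--4 rule out two boundary indices, and Cases 5--6 show that with a single boundary index $k$, one may take $a=k$ and, because $m\geq 5$ forces $m_k\geq 3$, some $y_k$ must be paired with a $y_j$ coming from $M$, so the factorization is legitimate. You also silently rely on the pairing-independence ``modulo (III)'' claim (that any two valid perfect matchings of the same monomial are connected by $Q^1,Q^2$-swaps); this is true and is used implicitly in the paper too, but it should be justified rather than asserted.

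For the remaining pure-power case $M=y_a^3$, your linear-system approach is a genuine alternative to the paper's. The paper writes the target $\mathcal{P}_1$ as $\mathcal{P}_2+\mathcal{P}_3$, where each $\mathcal{P}_i$ is the pullback of a different $M'G'$ already reduced to case $y_i^2y_j$ with $i$ in the support of $G'$; you instead solve for scalars $c_1,\ldots,c_4$ with $\sum_i c_i\,t_{a,j_i}\widetilde{P}_i\equiv\mathcal{P}_1 \pmod{\text{(III)}}$. Your reduction of the $4\times 4$ system to the two conditions $\sum c_i=0$ and $\sum c_i\alpha_i/d_{j_i}=1$ via $\gamma_k^{(i)}=\alpha_k-\alpha_id_{j_k}/d_{j_i}$ checks out, and the two functionals cannot be proportional unless $\ell_a=0$, so the system is solvable. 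However, your derivation assumes all $d_{j_i}\neq 0$ (i.e., $\{j_1,\ldots,j_4\}$ is a circuit); when the dependent set is not a circuit the argument degenerates (as the paper notes in its footnote, $G$ then pulls back through a type (I) element), and this degenerate subcase should at least be acknowledged. In short: the $y_a^3$ part is a workable variant, but the inductive factoring step as written is incomplete and needs the exponent-boundary case analysis supplied.
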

\begin{proof} As discussed in Remark \ref{exp-restrictions}, we can suppose that any generator of $\partial(I)$ is obtained by pairing variables $y$'s with different indices in $$M\cdot (d_{1}y_{2}y_{3}y_{4}+d_{2}y_{1}y_{3}y_{4}+d_{3}y_{1}y_{2}y_{4}+d_{4}y_{1}y_{2}y_{3})=:MG_{1,2,3,4},$$ where $d_{1}\ell_{1}+d_{2}\ell_{2}+d_{3}\ell_{3}+d_{4}\ell_{4}=0$, and $M$ is a monomial in $\mathbb K[y_1,\ldots,y_s]$ of odd degree satisfying the exponent restrictions.

\medskip

The first idea is to reduce the worked case to $\deg(M)\leq 3$.

So suppose $\deg(M)=m\geq 5$. The goal is to show that for any valid pairings of $MG_{1,\ldots,4}$, there exist $i\neq j$ such that $M=y_iy_jM'$, where $M'=y_{1}^{m'_1}\cdots y_{n}^{m'_n}$ and $m'=\deg(M')=m-2$, and $M'$ satisfies the exponent restrictions: $$2m'_l\leq m'+1,~\mathrm{for}~l=1,\ldots,4 ~\mathrm{and}~2m'_l\leq m'+3,~\mathrm{for}~l=5,\ldots,n.$$ With this at hand, modulo elements of type (III), the valid parings of $MG_{1,\ldots,4}$ will ``transfer'' to some valid pairings of $M'G_{1,\ldots,4}$. But by induction, this can be generated by elements of types (I), (III), and (IV), and the pullback of the pairing $y_iy_j$ will be just the variable $t_{i,j}$.

\boxed{Case\, 1.} Assume $2m_l< m+1,~\mathrm{for}~\mathrm{all}~l=1,\ldots,4 ~\mathrm{and}~2m_l< m+3,~\mathrm{for}~\mathrm{all}~l=5,\ldots,s$. Since $m$ is odd, then for any choice of $i$ and $j$, $i\neq j$, we have $m'_i=m_i-1$ and $2m'_i\leq m'+1~\mathrm{and}~2m'_i\leq m'+3$, depending if $i\in \{1,\ldots,4\}$ or $i\in \{5,\ldots,n\}$ (the same for $m'_j=m_j-1$). So $M'$ satisfies the exponent restrictions:
$$
2m'_l\leq m'+1,~\mathrm{for}~l=1,\ldots,4 ~\mathrm{and}~2m'_l\leq m'+3,~\mathrm{for}~l=5,\ldots,n.
$$
This is saying that no matter what the valid pairings we chose for $MG_{1,\ldots,4}$, we can write $M=y_iy_jM'$ for some $i\neq j$, with $M'$ satisfying the exponents restrictions\footnote{If $M=y_k^m$, since $m\geq 5$ and $\deg(G_{1, 2, 3,4})=3$, any valid pairing of $MG_{1,2,3,4}$ must lead to a valid pairing in $y_k^{m-3}, m-3\geq 2$, which is impossible. So there are $i\neq j$ with $m_i\geq 1$ and $m_j\geq 1$.}, and

\boxed{Case\, 2.} Suppose there are $k,l\in \{1,\ldots,4\}$ such that $2m_k=m+1=2m_l$. Then $2m_k+2m_l=2(m+1)$. Impossible.

\boxed{Case\, 3.} Suppose there are $k,l\in \{5,\ldots,n\}$ such that $2m_k=m+3=2m_l$. Then $2m_k+2m_l=2(m+3)$. Impossible.

\boxed{Case\, 4.} Suppose there is $k\in\{1,\ldots,4\}$ such that $2m_k=m+1$, and there is $l\in\{5,\ldots,n\}$ such that $2m_l=m+3$. Then $2m_l+2m_k=2(m+2)$. Impossible.

\boxed{Case\, 5.} Suppose $2m_k=m+1$ ($k\in\{1,\ldots,4\}$) and $2m_r<m+1,$ for all $r\in \{1,\ldots,4\}\setminus\{k\}$, and $2m_l<m+3$ for all $l\in \{5,\ldots,n\}$. Then we can choose $i=k$ and  $j$ any, and $M'$ satisfies the exponents restrictions. Since $m\geq 5$, then $m_k\geq 3$. So in any valid pairings of $MG_{1, 2, 3,4}$, there will be at least a $y_k$ not paired with any of the $y$'s showing in the expansion of $G_{1,2,3,4}$. But this $y_i=y_k$ must pair with another $y_j$ from $M$.

\boxed{Case\, 6.} Suppose $2m_i<m+1$ for all $i\in\{1,\ldots,4\}$, $2m_l=m+3$ for some $l\in\{5,\ldots,n\}$, and $2m_r<m+3$, for all $r\in \{5,\ldots,n\}\setminus\{l\}$. Then we can choose $i$ any, and $j=l$, and therefore $M'$ satisfies the exponents restrictions. Same as in the previous case, since $m\geq 5$, then $m_l\geq 4$. So in any valid pairings of $MG_{1,\ldots,4}$, there will be at least a $y_l$ not paired with any of the $y$'s showing in the expansion of $G_{1,\ldots,4}$. But this $y_j=y_l$ must pair with another $y_i$ from $M$.

\medskip

Above we showed how recursively we can drop the degree of the monomial $M$ by 2, if $m\geq 5$. If $m=1$, then we recover elements of type (IV). So we need to focus on the case $m=3$. We divide this case in subcases:
	
\begin{enumerate}

\item ${y_i}^2y_j, i\neq j$.

\begin{enumerate}

\item $i\in\{1,\ldots,4\}$; suppose $i=1$. Then, by looking at the last three terms of $${y_1}^2y_j (d_{1}y_{2}y_{3}y_{4}+d_{2}y_{1}y_{3}y_{4}+d_{3}y_{1}y_{2}y_{4}+d_{4}y_{1}y_{2}y_{3}),$$ we have only one valid pairings possible in those terms. The pullback looks $$d_{1}A+d_{2}t_{1,3}t_{1,4}t_{1,j}+d_{3}t_{1,2}t_{1,4}t_{1,j}+d_{4}t_{1,2}t_{1,3}t_{1,j},$$ where we have various options for the pairings that give $A$. But modulo elements of type (III)\footnote{For example $A=t_{1,2}t_{1,3}t_{4,j}$, we use the fact that $t_{1,3}t_{4,j}\equiv t_{3,4}t_{1,j}$.}, we can write the above as $$t_{1,j}(d_{1}t_{1,2}t_{3,4}+d_{2}t_{1,3}t_{1,4}+d_{3}t_{1,2}t_{1,4}+d_{4}t_{1,2}t_{1,3})=t_{1,j}P_{1,2,3,4}^{1}.$$

\item $i\in\{5,\ldots,n\}$, and $j\in \{1,\ldots,4\}$; suppose $j=1$. Then, by looking at the last three terms of $${y_i}^2y_1 (d_{1}y_{2}y_{3}y_{4}+d_{2}y_{1}y_{3}y_{4}+d_{3}y_{1}y_{2}y_{4}+d_{4}y_{1}y_{2}y_{3}),$$ we have only one valid pairings possible in those terms. The pullback looks $$d_{1}A+d_{2}t_{1,3}t_{1,i}t_{4,i}+d_{3}t_{1,2}t_{1,i}t_{4,i}+d_{4}t_{1,2}t_{1,i}t_{3,i},$$ where we have various options for the pairings that give $A$. But modulo elements of type (III)\footnote{For example $A=t_{1,2}t_{3,i}t_{4,i}$, we use the facts that $t_{1,2}t_{4,i}\equiv t_{1,i}t_{2,4}$, and $t_{2,4}t_{3,i}\equiv t_{2,3}t_{4,i}$.}, we can write the above as
                 $$t_{1,i}(d_{1}t_{2,3}t_{4,i}+d_{2}t_{1,3}t_{4,i}+d_{3}t_{1,2}t_{4,i}+d_{4}t_{1,2}t_{3,i})=t_{1,i}R_{1,2,3,4}^{i}.$$

\bigskip

    Now, suppose $j\in \{5,\ldots,n\}$ and $i\neq j$. So, we have $${y_i}^2y_j (d_{1}y_{2}y_{3}y_{4}+d_{2}y_{1}y_{3}y_{4}+d_{3}y_{1}y_{2}y_{4}+d_{4}y_{1}y_{2}y_{3}).$$ Suppose that the pullback of some valid pairing is $$d_{1}t_{2,i}t_{3,i}t_{4,j}+d_{2}t_{1,i}t_{3,i}t_{4,j}+d_{3}t_{1,i}t_{2,i}t_{4,j}+d_{4}t_{1,i}t_{2,i}t_{3,j}.$$       But $t_{3,i}t_{4,j}=t_{3,4}t_{i,j}$, $t_{2,i}t_{4,j}=t_{2,4}t_{i,j}$ and $t_{2,i}t_{3,j}=t_{2,3}t_{i,j}$ modulo elements of type (III), then we can rewrite            $$t_{i,j}(d_{1}t_{2,i}t_{3,4}+d_{2}t_{1,i}t_{3,4}+d_{3}t_{1,i}t_{2,4}+d_{4}t_{1,i}t_{2,3}).$$
             Again $t_{2,i}t_{3,4}=t_{2,3}t_{4,i},$ $t_{1,i}t_{3,4}=t_{1,3}t_{4,i}$, $t_{1,i}t_{2,4}=t_{1,2}t_{4,i}$ and $t_{1,i}t_{2,3}=t_{1,2}t_{3,i}$ modulo elements of type (III), so we can rewrite
             $$t_{i,j}(d_{1}t_{2,3}t_{4,i}+d_{2}t_{1,3}t_{4,i}+d_{3}t_{1,2}t_{4,i}+d_{4}t_{1,2}t_{3,i})=t_{i,j}R_{1,2,3,4}^{i}.$$		
	\end{enumerate}

\item $y_iy_jy_k$. A similar computations modulo elements of type (III) leads to the pullback of any valid pairings of $y_iy_jy_kG_{1,2,3,4}$ rewritten as $t_{u,v}F$ with $F$ of type (IV). As an example, suppose $i=1,j=2,k=3$. We must do valid pairings in $${y_1}y_2y_3 (d_{1}y_{2}y_{3}y_{4}+d_{2}y_{1}y_{3}y_{4}+d_{3}y_{1}y_{2}y_{4}+d_{4}y_{1}y_{2}y_{3}).$$ The pullback of the last term can only be $d_4t_{1,2}t_{1,3}t_{2,3}$. For each of the other three terms, there are three possible valid pairings; for example the first term can be $d_1t_{1,4}t_{2,3}^2, d_1t_{1,3}t_{2,3}t_{2,4}$, or $d_1t_{1,2}t_{2,3}t_{3,4}$. Suppose we have
$$d_1t_{1,4}t_{2,3}^2+d_2t_{1,3}^2t_{2,4}+d_3t_{1,2}t_{1,3}t_{2,4}+d_4t_{1,2}t_{1,3}t_{2,3}.$$ As $t_{1,3}t_{2,4}\equiv t_{1,4}t_{2,3}$, modulo elements of type (III), we can rewrite
$$t_{2,3}(d_1t_{1,4}t_{2,3}+d_2t_{1,3}t_{1,4}+d_3t_{1,2}t_{1,4}+d_4t_{1,2}t_{1,3}).$$ Since $t_{1,4}t_{2,3}\equiv t_{1,2}t_{3,4}$, modulo elements of type (III), we obtain $$t_{2,3}P_{1,2,3,4}^1.$$
		
\item ${y_i}^3,~i\in\{5,\ldots,n\}$; suppose $i=5$. From the relation $d_1\ell_1+d_2\ell_2+d_3\ell_3+d_4\ell_4=0$, we have
$${y_5}^3 (d_{1}y_{2}y_{3}y_{4}+d_{2}y_{1}y_{3}y_{4}+d_{3}y_{1}y_{2}y_{4}+d_{4}y_{1}y_{2}y_{3}).
    $$
		Then using the remark, we have only one option: $$\mathcal{P}_1:=d_{1}t_{2,5}t_{3,5}t_{4,5}+d_{2}t_{1,5}t_{3,5}t_{4,5}+d_{3}t_{1,5}t_{2,5}t_{4,5}+d_{4}t_{1,5}t_{2,5}t_{3,5}.$$
		From the relation $f_1\ell_1+f_2\ell_2+f_3\ell_3+f_5\ell_5=0$ (we can assume $f_1=d_1$),  we can associate the element in Orlik-Terao ideal $$ d_{1}y_{2}y_{3}y_{5}+f_{2}y_{1}y_{3}y_{5}+f_{3}y_{1}y_{2}y_{5}+f_{5}y_{1}y_{2}y_{3}.$$
		Multiplying by $y_4{y_5}^2$, the pullback of a valid pairings is:
		$$\mathcal{P}_2:=d_{1}t_{2,5}t_{3,5}t_{4,5}+f_{2}t_{1,5}t_{3,5}t_{4,5}+f_{3}t_{1,5}t_{2,5}t_{4,5}+f_{5}t_{1,2}t_{3,5}t_{4,5}.$$
		Note that $\mathcal{P}_2$ is already treated in the case (1) (a). From the two relations above we have the third relation:  $e_2\ell_2+e_3\ell_3+e_4\ell_4+e_5\ell_5=0$ with $e_2=d_2-f_2,~e_3=d_3-f_3,~e_4=d_4$ and $e_5=-f_5$. This relation gives us another element in Orlik-Terao ideal $$ e_{2}y_{3}y_{4}y_{5}+e_{3}y_{2}y_{4}y_{5}+e_{4}y_{2}y_{3}y_{5}+e_{5}y_{2}y_{3}y_{4}.$$
		Multiplying by $y_1{y_5}^2$, the pullback of a valid pairing is:
		$$\mathcal{P}_3:=e_{2}t_{1,5}t_{3,5}t_{4,5}+e_{3}t_{1,5}t_{2,5}t_{4,5}+e_{4}t_{1,5}t_{2,5}t_{3,5}+e_{5}t_{1,2}t_{3,5}t_{4,5}.$$
	    Note that $\mathcal{P}_3$ is already treated in the case (1) (a). But we obviously have $$\mathcal{P}_1=\mathcal{P}_2+\mathcal{P}_3.$$
\end{enumerate}

So the case $m=3$ is also completely analysed.
\end{proof}

\smallskip

\begin{thm}\label{fiber-type}
	Using the above notations, the ideal $I:=I_{s-2}(\A)$ is of fiber type.
\end{thm}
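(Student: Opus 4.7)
The plan is to establish $\mathcal{I} \subseteq \mathrm{sym}(I) + \partial(I) \cdot T$, since the reverse containment holds by the definitions.

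The key input is Remark~\ref{gensRees}: since $I$ has linear powers, the presentation ideal $\mathcal{I}$ is minimally generated in $x$-degree at most $1$. So it suffices to check, for every bidegree $(u,v)$ with $u \in \{0,1\}$, that $\mathcal{I}_{(u,v)} \subseteq \mathrm{sym}(I) + \partial(I) \cdot T$. When $u = 0$, this is immediate from Theorem~\ref{fibertype}: $\mathcal{I}_{(0,v)} = \partial(I)_v$ is generated by elements of types~(I), (III), and (IV), all lying in $\partial(I) \cdot T$. When $u = v = 1$, one has $\mathcal{I}_{(1,1)} \subseteq \mathcal{I}_{(-,1)} \subseteq \mathrm{sym}(I)$ directly.

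The main remaining case is $u = 1$ and $v \geq 2$. Fix $F \in \mathcal{I}_{(1,v)}$. The idea is to exploit the type~(II) elements $A_{a,b,c}$, $B_{a,b,c}$, $C_{a,b,c}$ belonging to $\mathrm{sym}(I)$: modulo $\mathrm{sym}(I)$ they yield the identifications $\ell_a \bar{t}_{a,b} = \ell_c \bar{t}_{b,c}$ in $\mathcal{S}(I) = T/\mathrm{sym}(I)$ (both expressions lifting to the same element $\ell_1 \cdots \widehat{\ell}_b \cdots \ell_s \in I$). Combined with the type~(I) circuit relations, iterated application of these identifications rewrites each monomial $\ell_a \cdot t_{i_1,j_1} \cdots t_{i_v,j_v}$ appearing in $F$ modulo $\mathrm{sym}(I)$ so that all terms share a common linear-form prefactor $\ell \in \Sigma$. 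This yields $F \equiv \ell \cdot g \pmod{\mathrm{sym}(I)}$ for some polynomial $g$ in the $t_{i,j}$ of $t$-degree $v$ with $\mathbb{K}$-coefficients. Since $F$, and therefore $\ell \cdot g$, lies in $\mathcal{I}$, and since $T/\mathcal{I} = R[It] \subseteq R[t]$ is a domain in which $\ell$ is a nonzerodivisor, one concludes $g \in \mathcal{I}_{(0,v)} = \partial(I)_v$. Therefore $F \in \mathrm{sym}(I) + \ell \cdot \partial(I) \subseteq \mathrm{sym}(I) + \partial(I) \cdot T$, as required.

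The main obstacle is the $\mathrm{sym}(I)$-reduction just described: systematically rewriting the monomials of an arbitrary $F \in \mathcal{I}_{(1,v)}$ so that they share a common linear-form prefactor. This requires a case-by-case analysis on which variables $t_{i,j}$ occur in each monomial and on the dependent subsets of $\mathcal{A}$ of size three that arise when using type~(I), in the same combinatorial spirit as the proof of Theorem~\ref{fibertype}. The hypothesis $F \in \mathcal{I}$---not merely $F \in T_{(1,v)}$---is essential: the defining relations of the Rees algebra are what force the necessary cancellations across monomials to permit the collection into a single prefactor.
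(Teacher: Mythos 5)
Your overall decomposition into bidegree pieces and the use of Remark~\ref{gensRees} to restrict to $x$-degree at most $1$ matches the paper's starting point, and the domain argument you give (if $\ell g\in\mathcal I$ with $\ell\in R_1$, then $g\in\mathcal I$ because $R[It]$ is a domain containing $R$) is correct. But the heart of your argument --- the claim that an arbitrary $F\in\mathcal I_{(1,v)}$ can be rewritten modulo $\mathrm{sym}(I)$ so that \emph{all} monomials share a \emph{single} common linear-form prefactor $\ell$ --- is asserted, not proved, and I don't think the tools you invoke can deliver it. The type~(II) identifications only move a prefactor $\ell_a$ when the index $a$ already appears in one of the $t_{i,j}$-factors of that monomial (they have the shape $\ell_a t_{a,b}\equiv\ell_c t_{b,c}$); for a monomial like $\ell_1 t_{2,3}t_{4,5}$ the prefactor index is disjoint from every $t$-variable and no $A$-, $B$-, or $C$-type relation applies directly. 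The type~(I) relations have constant coefficients and do not move a linear form between terms; using them to rewrite $t_{2,3}$ as a combination of $t_{1,2},t_{1,3}$ and then applying type~(II) simply cycles back to where one started (the dependency $c_1\ell_1+c_2\ell_2+c_3\ell_3=0$ makes the net effect trivial). And elements such as $F=\ell_1 Q^1_{2,3,4,5}+\ell_2 Q^1_{1,3,4,5}$ already lie in $T\cdot\partial(I)$ with two distinct prefactors, and there is no visible $\mathrm{sym}(I)$-reduction collapsing them to one. So the ``single prefactor'' form is a genuinely stronger statement than fiber type itself, and without it the nonzerodivisor step has nothing to bite on.

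The paper sidesteps exactly this obstruction with a different device: it substitutes $t_{i,j}\mapsto y_iy_j$ and observes that, because $f_{i,j}=f/(\ell_i\ell_j)$ and $F$ is homogeneous of degree $d$ in the $t$-variables, $F(\ldots,y_iy_j,\ldots)$ lands in $\mathcal I(\A,s-1)$, the presentation ideal of the Rees algebra of $I_{s-1}(\A)$. Since $I_{s-1}(\A)$ is already known to be of fiber type (\cite[Theorem 4.2]{GaSiTo}), one gets an explicit decomposition $F(\ldots,y_iy_j,\ldots)=\sum_i\mathcal B_i(\ell_iy_i-\ell_{i+1}y_{i+1})+\sum\mathcal C\,G_{j_1,\ldots,j_4}$ with the standard Orlik--Terao syzygies and ideal generators. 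The technical content of the paper's proof then consists of degree/exponent counting (their ``Claim'' that each monomial of $\mathcal B_i$ contains a $y_r$ with $r\neq i,i+1$, plus the exponent restrictions from Remark~\ref{exp-restrictions}) to show each piece admits a valid pairing pulling back to a type~(II) element times a monomial, or to a type~(IV) element times a monomial times a linear form. Crucially, the paper's conclusion is $F\in\mathrm{sym}(I)+T\cdot\partial(I)$ with possibly many different linear-form coefficients distributed over the terms --- not the single-prefactor factorization you need. If you want to pursue your more direct strategy, the missing ingredient is a genuine reduction lemma for $\mathcal S(I)_{(1,v)}$ in the spirit of Gr\"obner-basis normal forms over $\mathrm{sym}(I)$, and at that point you would be doing combinatorial work at least as heavy as the paper's, without the benefit of the $a=s-1$ result to outsource the hard cases.
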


\begin{proof}
Let $F(\ldots,t_{i,j}, \ldots)\in \mathcal{I}$ a generator of degree $d+1$ of the Rees ideal of $I$. If $d=0$, then by definition, $F$ is a linear form in $t_{i,j}$, with constant coefficients, so it is and element of $\partial(I)$ (i.e., a linear combination of elements of type (I)).

Suppose $d\geq 1$. By the Remark \ref{gensRees} we can suppose $$F(\ldots,t_{i,j}, \ldots)=\sum_{n_{i_1,i_2}+\cdots +n_{i_u,i_{u+1}}=d}L_{i_1,i_2,\ldots,i_u,i_{u+1}}t_{i_1,i_2}^{n_{i_1,i_2}}\cdots t_{i_u,i_{u+1}}^{n_{i_u,i_{u+1}}}, $$ with $L_{i_1,i_2,\ldots,i_u,i_{u+1}} \in \mathbb{K}[x,y,z]$ a linear form.

Since $F(\ldots,t_{i,j}, \ldots)\in \mathcal{I}$, we have $F(\ldots,f_{i,j}, \ldots)=0,$ that is, $F(\ldots,\frac{f}{\ell_i\ell_j}, \ldots)=0,$ with $f=\ell_1\cdots\ell_s$. Multiplying by $f^d$, we have $f^dF(\ldots,\frac{f}{\ell_i\ell_j}, \ldots)=F(\ldots,\frac{f}{\ell_i}\frac{f}{\ell_j}, \ldots)=0$. Then we can consider $F(\ldots,y_iy_j,\ldots) \in \mathcal{I}(\mathcal{A},s-1)\subset \mathbb{K}[x,y,z,y_1,\ldots,y_s]$ the presentation ideal of the Rees algebra of the ideal $I_{s-1}(\mathcal{A})=\langle \ell_2\cdots \ell_s, \ldots, \ell_1\cdots \ell_{s-1}\rangle.$

By \cite[Theorem 4.2]{GaSiTo}, we know that $I_{s-1}(\mathcal{A})$ is of fiber type, that is, $$\mathcal{I}(\mathcal{A},s-1)=\langle\mathrm{sym}(I_{s-1}(\mathcal{A})),\partial(I_{s-1}(\mathcal{A}))\rangle,$$ where $\partial(I_{s-1}(\mathcal{A}))=\langle\mathcal{I}(\mathcal{A},s-1)_{(0,-)}\rangle$ the Orlik-Terao ideal of $\mathcal{A}.$

By \cite[Lemma 3.1(b)]{GaSiTo} and \cite{OrTe}, sets of generators of these ideals are:
    $$\mathrm{sym}(I_{s-1}(\mathcal{A}))=\langle \ell_iy_i-\ell_{i+1}y_{i+1}~|~1\leq i \leq s-1\rangle,~\mathrm{and}$$ $$\partial(I_{s-1}(\mathcal{A}))=\langle G_{j_1,\ldots,j_4}~|~d_{j_1}\ell_{j_1}+d_{j_2}\ell_{j_2}+d_{j_3}\ell_{j_3}+d_{j_4}\ell_{j_4}=0\rangle,$$ where $G_{j_1,\ldots,j_4}$ is described in the Remark \ref{exp-restrictions}.
So, we can write \begin{eqnarray}
    F(\ldots,y_{i}y_{j}, \ldots)=\sum_{n_{i_1,i_2}+\cdots +n_{i_u,i_{u+1}}=d}L_{i_1,i_2,\ldots,i_u,i_{u+1}}(y_{i_1}y_{i_2})^{n_{i_1,i_2}}\cdots (y_{i_u}y_{i_{u+1}})^{n_{i_u,i_{u+1}}}\nonumber\\
    =\sum_{i=1}^{s-1}\mathcal{B}_i(\ell_iy_i-\ell_{i+1}y_{i+1}) + \sum\mathcal{C}_{j_1,\ldots,j_4}G_{j_1,\ldots,j_4}\hspace{3.4cm}
    \end{eqnarray}
    with ${B}_i,\mathcal{C}_{j_1,\ldots,j_4} \in \mathbb K[x,y,z,y_1,\ldots,y_s].$  About these polynomials we have the following properties:
\begin{enumerate}
    	\item[(*)] All the monomials showing up in expression (1) are constructed from pairings $y_iy_j$'s, and these pairings must be {\em valid pairings} (as we say in Remark \ref{exp-restrictions}) in order to pull back to variables $t_{i,j}$ in the expression of $F$. Below, we'll use ``$\,\widehat{\_}$\,'' to denote this pull back.
    	\item[(**)] As $F(\ldots,y_{i}y_j,\ldots)$ has degree $2d$ in variables $y_{i}$'s and  degree $1$ in variables $x,y,z$, then $\mathcal{B}_i$ must have degree $2d-1$ in variables $y_i$'s and  degree $1$ in variables $x,y,z$. About $\mathcal{C}_{j_1,\ldots,j_4}$, it must have degree $1$ in variables $x,y,z$ and  have either degree $2d-3$, or $2d-2$, in variables $y_i$'s.
    \end{enumerate}
    Here, and below, by ``monomial'' we will understand a monomial in variables $y_i$'s.

    So, using the statements above and the fact that $d\geq 1$, we can suppose that in each monomial in $\mathcal{B}_i$ and $\mathcal{C}_{j_1,\ldots,j_4}$ there is a variable $y_r$ for some $r \in\{1,\ldots,s\}$. If $d=1$, then the degree of $\mathcal{C}_{j_1,\ldots,j_4}$ in variables $y_i$'s can only be 0, so the pull back (i.e., pairing of $y_iy_j\leftrightarrow t_{i,j}$) of this term will give a combination of generators of type (I) with coefficients polynomials in variables $x,y,z$, so an element of $\partial(I)$.

    \medskip

    \noindent{\bf Claim:} In regard to $\mathcal{B}_i$, for each $i\in \{1,\ldots,s\}$,  we can suppose $r\neq i,i+1$.

    \smallskip

    {\em Proof of Claim.} Suppose that we have a monomial in $\mathcal{B}_i$ of the form $\mathcal{L}y_i^{m_i}y_{i+1}^{m_{i+1}}$ with $m_i+m_{i+1}=2d-1$ and $\mathcal{L}\in \mathbb{K}[x,y,z]_1$. If $m_{i+1}=0$ (or $m_{i}=0$), then we will have $\mathcal{L}y_i^{2d-1}(\ell_iy_i-\ell_{i+1}y_{i+1})$ showing up in expression (1), but this contradicts (*). If $m_i,m_{i+1}>0$ then we have $$\mathcal{L}y_i^{m_i}y_{i+1}^{m_{i+1}}(\ell_iy_i-\ell_{i+1}y_{i+1}).$$ For (*) to happen (i.e., valid pairings), in the first monomial above we need to have $m_i=d-1$ and $m_{i+1}=d$, and in the second monomial we need to have $m_i=d$ and $m_{i+1}=d-1$; an obvious contradiction. And Claim is shown.

\medskip

    So, from the Claim above, for each monomial $\mathcal{M}$ of each $\mathcal{B}_i$ there is $Q\in \mathbb{K}[x,y,z,y_1,\ldots,y_s]_{2d-2}$ such that $$\mathcal{M}=Qy_r(\ell_iy_i-\ell_{i+1}y_{i+1}) =Q(\ell_iy_iy_r-\ell_{i+1}y_{i+1}y_r).$$
    If $r>i+1$, then the pull back looks $\widehat{\mathcal{M}}=\widehat{Q}B_{i,i+1,r}$ and if $r<i$, then the pullback looks $\widehat{\mathcal{M}}=\widehat{Q}C_{r,i,i+1}$, where $B_{i,i+1,r}, C_{r,i,i+1}$ are elements of the type (II).

\medskip

    Now we analyse each monomial of $\mathcal{C}_{j_1,\ldots,j_4}$ only in the variables $y_i's$ (we disregard the ``coefficients'' which are linear forms in variables $x,y,z$ since they do not affect the pull back to variables $t_{i,j}$).

    Suppose $(j_1,\ldots,j_4) = (1,\ldots,4)$. Let $\mathcal{N}=y_{1}^{n_1}y_{2}^{n_2}y_{3}^{n_3}y_{4}^{n_4}y_{5}^{n_5}\ldots y_{s}^{n_s}$ be such a monomial. In this case we have only to discuss the case $n_1+\cdots+n_s=2d-3$. \footnote{If $\deg(G_{1,\ldots,4})=2$ (i.e., one of $d_1,d_2,d_3,d_4$ is 0), then $G_{1,\ldots,4}$ pulls back to an element of type (I), and also, it is not difficult to see that we must have valid pairings of the $y_i$'s in $\mathcal{N}$.} The condition (*) (or the exponent restrictions in Remark \ref{exp-restrictions}) applied to $\mathcal{N}G_{1,\ldots,4}$ leads to $2n_i\leq 2d-2$, that is, $n_i\leq d-1$ for each $i\in \{1,\ldots,s\}.$

    By symmetry, we only need to analyse two cases: if $n_1\leq n_2 \leq \cdots \leq n_s$ or $n_s\leq n_{s-1} \leq \cdots \leq n_1$.

    Suppose we are in the first case. Then we can organize the term $\mathcal{N}G_{1,\ldots,4}$ in the following way:
    \begin{eqnarray}\label{eq1}
    \hspace{1cm}\mathcal{N}G_{1,\ldots,4}=(y_{1}^{n_1}y_{2}^{n_2}y_{3}^{n_3}y_{4}^{n_4}y_{5}^{n_5}\cdots y_{s}^{n_s-1})y_{s}(d_{1}y_{2}y_{3}y_{4}+d_{2}y_{1}y_{3}y_{4}+d_{3}y_{1}y_{2}y_{4}+d_{4}y_{1}y_{2}y_{3}).
    \end{eqnarray}

    Note that we can pair all the variables in the element $y_{1}^{n_1}y_{2}^{n_2}y_{3}^{n_3}y_{4}^{n_4}y_{5}^{n_5}\ldots y_{s}^{n_s-1}$. For that we need to have the exponent restrictions $n_i \leq n_1 +\cdots \widehat{n_i} + \cdots + n_s -1$ for each $i\in \{1,\ldots,s-1\}$ and $n_s -1 \leq n_1  \cdots + n_{s-1}$. But by the hypothesis we already have it for $i\leq s-1$. If we suppose $n_s -1 > n_1 +\cdots + n_{s-1}$, then we have $n_s-1 > 2d-3 - n_s$, that is, $2n_s > 2d-2 $, equivalently $n_s > d-1,$ which it is a contradiction. So, the pull back of the expression (\ref{eq1}) looks $\widehat{\mathcal{N}}R^{s}_{1,2,3,4}$, with $R^{s}_{1,2,3,4}$ element of the type (IV).

    If we suppose $n_s\leq n_{s-1} \leq \cdots \leq n_1$ we can organize in the same way and the pull back of the expression (\ref{eq1}) looks $\widehat{\mathcal{N}}P^{1}_{1,2,3,4}$, with $P^{1}_{1,2,3,4}$ element of the type (IV).

    \medskip

    So, pulling back $F(\ldots,y_iy_j,\ldots)$ to $F(\ldots,t_{i,j},\ldots)$, we can see that it belongs to ${\rm sym}(I)+\partial(I)$, hence $I$ is of fiber type.
    \end{proof}

\bigskip

We end this section asking the two natural questions. In the case $a=s-2$ and $k=3$, is the Rees algebra of $I_{s-2}(\A)$ Cohen-Macaulay, or more specifically, is the special fiber (i.e., the Orlik-Terao algebra of the second order) Cohen-Macaulay? When $a=s-1$ and any $k$, these questions were answered affirmatively in \cite{GaSiTo}.

\bigskip

\section{Star Configurations}

\medskip

Let $R:=\mathbb K[x_0, \ldots, x_N]$ be a ring of homogeneous polynomials over a field $\mathbb K$. Let $\mathcal{A}=\{H_1,\ldots,H_s\}$ be a collection of $s\geq N+1$ distinct hyperplanes in $\mathbb P^N$, and suppose $\ell_1,\ldots,\ell_s\in R$ are defining linear forms of these hyperplanes: i.e., $H_i=V(\ell_i), i=1,\ldots,s$. We assume these hyperplanes meet {\it properly}, by which we mean that the intersection of any $j$ of these hyperplanes is either empty or has codimension $j$. For any $1\leq c\leq N$, let $V_c\left(\mathcal{A}\right)$ be the union of the codimension $c$ linear varieties defined by all the intersections of these hyperplanes, taken $c$ at a time:
$$
V_c\left(\mathcal{A}\right):=\bigcup_{1\leq j_1<\cdots<j_c\leq s}H_{j_1}\cap\cdots\cap H_{j_c}.
$$
We call $V_c\left(\mathcal{A}\right)$ {\em the star configuration of codimension $c$ with support $\mathcal{A}$ (in $\mathbb P^N$)}.
It is clear that the defining ideal is
$$I(V_c(\mathcal{A}))=\bigcap_{1\leq j_1<\cdots<j_c\leq s}\langle \ell_{j_1},\ldots,\ell_{j_c}\rangle,$$
and the {\em $m$-th symbolic power} of this ideal is $$I(V_c(\mathcal{A}))^{(m)}=\bigcap_{1\leq j_1<\cdots<j_c\leq s}\langle \ell_{j_1},\ldots,\ell_{j_c}\rangle^m.$$

For any homogeneous ideal $J$ in $R$, we use $\alpha(J)$ to denote the least degree of a nonzero element in $J$. By \cite[Corollary 4.6]{GeHaMi}, one has $\alpha\left(I(V_c(\mathcal{A}))^{(m)}\right)=(q+1)s-c+r$ if $m=qc+r$ for $1\leq r\leq c$.

We will prove that the defining ideal of  star configuration of codimension $c$ satisfies the following properties:

\begin{prop} \label{star}
Let $I=I(V_c(\mathcal{A}))$ be the defining ideal of a star configuration of codimension $c$ and $M=\langle x_0, \ldots, x_N\rangle$  the irrelevant maximal ideal of  $R$. Then for $m\geq 1$, $I$ satisfies the following properties:
\begin{enumerate}
\item $I^{(mc)}\subseteq M^{m(c-1)}I^m$. In particular, if \, ${\rm ht}(I)=N$, then $I^{(mN)}\subseteq M^{m(N-1)}I^m$.
\item $I^{(mc-c+1)}\subseteq I^m$ and  $I^{(mN-N+1)}\subseteq I^m$.
\item $I^{(mc-c+1)}\subseteq M^{(m-1)(c-1)}I^m$. In particular, if \, ${\rm ht}(I)=N$, then $I^{(mN-N+1)}\subseteq M^{(m-1)(N-1)}I^m$.
\item For all $t\geq 0$, one has $\frac{\alpha\left(I^{(t)}\right)}{t}\geq \frac{\alpha\left(I^{(m)}\right)+c-1}{m+c-1}$.
\item For all $t\geq 0$, one has $\frac{\alpha\left(I^{(t)}\right)}{t}\geq \frac{\alpha\left(I^{(m)}\right)+N-1}{m+N-1}$.
\item For all $t\geq 0$, one has $\frac{\alpha\left(I^{(t)}\right)}{t}\geq \frac{\alpha\left(I\right)+N-1}{N}$.
\end{enumerate}
\end{prop}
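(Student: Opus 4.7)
My plan is to apply Theorem~\ref{theorem1} to the collection $\Sigma=(\ell_1^m,\ldots,\ell_s^m)$ with $a=m(s-c+1)$. The key identification is $I^m=I_{m(s-c+1)}(\Sigma)$, which holds because $I=I(V_c(\mathcal{A}))$ is generated by the $(s-c+1)$-fold products of $\ell_1,\ldots,\ell_s$, so $I^m$ is generated by all $m(s-c+1)$-fold products in which each $\ell_i$ appears at most $m$ times, exactly matching the generators of $I_{m(s-c+1)}(\Sigma)$. Writing $P_K:=\langle\ell_k:k\in K\rangle$ for $K\subseteq\{1,\ldots,s\}$, Theorem~\ref{theorem1} gives $I^m$ a linear graded free resolution (so generated in degree $d:=m(s-c+1)$) and the primary decomposition
\[
I^m \;=\; \bigcap_{j=c}^{N}\bigcap_{|K|=j}P_K^{\,m(j-c+1)}\;\cap\;M^{d}.
\]
Hence $(I^m)^{\rm sat}=\bigcap_{j=c}^{N}\bigcap_{|K|=j}P_K^{\,m(j-c+1)}$, and Remark~\ref{remark0} yields $I^m=(I^m)^{\rm sat}\cap M^{d}$. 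I will also use the following general fact: if $J$ is an ideal generated in degree $d$, then $J\cap M^{d+e}=M^{e}\cdot J$ for every $e\ge 0$; indeed, any homogeneous $f\in J_{d+e}$ can be written $\sum r_ig_i$ with $g_i\in J_d$ generators and $r_i\in R_e\subseteq M^{e}$.

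The crux is (2), i.e., $I^{(mc-c+1)}\subseteq I^m$, which I prove by establishing $I^{(mc-c+1)}\subseteq M^{d}$ and $I^{(mc-c+1)}\subseteq(I^m)^{\rm sat}$. The first follows from $\alpha(I^{(mc-c+1)})=ms-c+1\ge m(s-c+1)$, equivalent to $(m-1)(c-1)\ge 0$. For the second, fix $K$ with $|K|=j$, $c\le j\le N$, and introduce the sub-star-configuration $\tilde I:=\bigcap_{K'\subseteq K,\,|K'|=c}P_{K'}$, a genuine star configuration of $j$ hyperplanes and codimension $c$ in the subring $\mathbb{K}[\ell_k:k\in K]$. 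Since $I^{(mc-c+1)}\subseteq\tilde I^{(mc-c+1)}$, it suffices to check $\tilde I^{(mc-c+1)}\subseteq P_K^{\,m(j-c+1)}$. The $\alpha$-invariant formula of [GeHaMi,~Cor.~4.6], applied to the sub-star-configuration in the subring $\mathbb{K}[\ell_k:k\in K]$, gives $\alpha(\tilde I^{(mc-c+1)})=mj-c+1\ge m(j-c+1)$. Because $\tilde I^{(mc-c+1)}$ is extended from that subring (intersection commutes with free polynomial extensions), each homogeneous generator is a polynomial in the $\ell_k$ ($k\in K$) of degree at least $mj-c+1$, and this degree bound is preserved under multiplication by arbitrary elements of $R$; so every element of $\tilde I^{(mc-c+1)}$ lies in $P_K^{\,m(j-c+1)}$. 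This sub-configuration reduction is the main conceptual obstacle of the whole proof.

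The remaining containments (1) and (3) follow mechanically. For (1): $I^{(mc)}\subseteq I^{(mc-c+1)}\subseteq I^m$ (using (2) and $mc\ge mc-c+1$), and $\alpha(I^{(mc)})=ms$ gives $I^{(mc)}\subseteq M^{ms}$; the general fact with $e=m(c-1)$, $d+e=ms$, yields $I^m\cap M^{ms}=M^{m(c-1)}I^m$, whence $I^{(mc)}\subseteq M^{m(c-1)}I^m$. For (3): (2) gives $I^{(mc-c+1)}\subseteq I^m$, and $\alpha(I^{(mc-c+1)})=ms-c+1$ gives $I^{(mc-c+1)}\subseteq M^{ms-c+1}$; the general fact with $e=(m-1)(c-1)$, $d+e=ms-c+1$, produces the conclusion. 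The ``in particular'' clauses are the specialization $c=N$.

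For (4)--(6), I use that $\{\alpha(I^{(k)})\}_{k}$ is subadditive (from $I^{(k)}I^{(t)}\subseteq I^{(k+t)}$), so Fekete's lemma gives $\alpha(I^{(t)})/t\ge\widehat\alpha(I):=\lim_m\alpha(I^{(m)})/m$, and the [GeHaMi] formula directly computes $\widehat\alpha(I)=s/c$. It then suffices to verify that each stated right-hand side is at most $s/c$: writing $m=qc+r$ with $1\le r\le c$ so that $\alpha(I^{(m)})=(q+1)s-c+r$, clearing denominators reduces the inequality in (4) to $(s-c)(c-r)\le(s-c)(c-1)$, which holds as $r\ge 1$; the inequality in (5) to $c-r\le N-1$, which holds as $r\ge 1$ and $c\le N$; and the inequality in (6) to $s(c-N)\le c(c-N)$, which holds as $c\le N$ (with equality when $c=N$).
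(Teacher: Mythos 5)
Your proof is correct, and it splits naturally into two halves relative to the paper's argument. For parts (1)--(3) your route is essentially the paper's: you use the primary decomposition from Theorem~\ref{theorem1} (the paper cites the equivalent statement from \cite{ToXi}), prove $I^{(mc-c+1)}\subseteq I^m$ by localizing the problem to sub-star-configurations on $j$ of the hyperplanes and using the $\alpha$-invariant formula of \cite[Cor.~4.6]{GeHaMi} in the subring, and then upgrade the containments to the $M$-power refinements by degree counting. One small improvement on your side: you obtain (1) from (2) via $I^{(mc)}\subseteq I^{(mc-c+1)}$, whereas the paper proves (1) first by invoking $I^{(mc)}\subseteq I^m$ from Ein--Lazarsfeld--Smith/Hochster--Huneke; your version is more self-contained and makes the citation of \cite{EiLaSm,HoHu} unnecessary. (You implicitly leave the $N$-variant $I^{(mN-N+1)}\subseteq I^m$ to the reader; it is immediate from $mN-N+1\ge mc-c+1$, but a one-line remark wouldn't hurt.)

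For parts (4)--(6) you take a genuinely different route. The paper verifies each inequality by a direct algebraic manipulation after substituting the $\alpha$-formula. You instead insert the Waldschmidt constant $\widehat\alpha(I)=s/c$ as an intermediary: by subadditivity of $\alpha(I^{(\cdot)})$ and Fekete's lemma, $\alpha(I^{(t)})/t\ge\widehat\alpha(I)$ for all $t\ge 1$, and you check that each stated right-hand side is $\le s/c$ by short computations (your reduced inequalities $(s-c)(c-r)\le(s-c)(c-1)$, $c-r\le N-1$, and $s(c-N)\le c(c-N)$ are all correct). This factoring through $s/c$ is cleaner and more conceptual, and it makes the ordering of the bounds in (4), (5), (6) transparent; the paper's direct approach, by contrast, is more elementary in that it avoids Fekete's lemma and the Waldschmidt constant, and it also exposes the slack in each inequality explicitly. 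Both are valid; yours buys clarity at the cost of appealing to one more standard tool.
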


\begin{proof} (1)  By \cite[Theorem A]{EiLaSm} or \cite[Theorem 1.1]{HoHu}, we have $I^{(mc)}\subseteq I^m$.
By \cite[Corollary 4.6]{GeHaMi}, we have $\alpha\left(I^{(mc)}\right)=ms$. Since $I$ is generated in degree $s-c+1$, we have $I^m$ is generated in degree $m(s-c+1)$. Since $ms=m(s-c+1)+m(c-1)$, we have $I^{(mc)}= [I^{(mc)}]_{t\geq ms}\subseteq[I^{m}]_{t\geq ms}=[M^{m(c-1)}I^m]_{t\geq ms}$. So
$I^{(mc)}\subseteq M^{m(c-1)}I^m$ for all $m\geq 1$.

(2) By \cite[Theorem 3.2]{ToXi} or by Theorem \ref{theorem1}, for $m\geq 1$, one has
$$
I^m=I(V_c(\mathcal{A}))^m=I(V_c(\mathcal{A}))^{(m)}\cap I(V_{c+1}(\mathcal{A}))^{(2m)}\cap\cdots\cap I(V_N(\mathcal{A}))^{((N-c+1)m)}\cap M^{(s-c+1)m}.
$$
Since $mc-c+1=(m-1)c+1$, by \cite[Corollary 4.6]{GeHaMi}, we have $\alpha\left(I^{(mc-c+1)}\right)=ms-c+1=m(s-c+1)+(m-1)(c-1)\geq m(s-c+1)$. Hence $I^{(mc-c+1)}\subseteq M^{(s-c+1)m}$.

Now for any $c\leq i\leq N$ and any codimension $i$ linear variety defined by $\mathcal{A}^{\prime}=\{H_{j_1}, \ldots, \mathcal{H}_{j_i}\}$,  we have
$$
I(V_c(\mathcal{A}))^{(mc-c+1)}\subseteq I(V_c(\mathcal{A}^{\prime}))^{(mc-c+1)}.
$$
We can treat $V_c(\mathcal{A}^{\prime})$ as a star configuration of codimension $c$ in $\mathbb{P}^{i-1}$. Again by \cite[Corollary 4.6]{GeHaMi}, we have $\alpha\left(I(V_c(\mathcal{A}^{\prime}))^{(mc-c+1)}\right)=mi-c+1=m(i-c+1)+(m-1)(c-1)\geq m(i-c+1)$. Hence
$$
I(V_c(\mathcal{A}))^{(mc-c+1)}\subseteq I(V_c(\mathcal{A}^{\prime}))^{(mc-c+1)}\subseteq \langle\ell_{j_1}, \ldots, \ell_{j_i}\rangle^{m(i-c+1)}.
$$
As $\mathcal{A}^{\prime}$ ranges over all codimension $i$ linear varieties, one has $I(V_c(\mathcal{A}))^{(mc-c+1)}\subseteq I(V_{i}(\mathcal{A}))^{(m(i-c+1))}$ for any $c\leq i\leq N$. This completes the proof that $I^{(mc-c+1)}\subseteq I^m$.

The second inclusion $I^{(mN-N+1)}\subseteq I^m$ holds because $I^{(mN-N+1)}\subseteq I^{(mc-c+1)}$.

(3) By (2), we have $I^{(mc-c+1)}\subseteq I^m$.  Since $\alpha\left(I^{(mc-c+1)}\right)=ms-c+1=m(s-c+1)+(m-1)(c-1)$ and $I^m$ is generated in degree $m(s-c+1)$, we have $I^{(mc-c+1)}\subseteq M^{(m-1)(c-1)}I^m$.

(4) Write $t=q_1c+r_1$ for $1\leq r_1\leq c$ and  $m=q_2c+r_2$ for $1\leq r_2\leq c$. Then by \cite[Corollary 4.6]{GeHaMi}, we have $\alpha\left(I^{(t)}\right)=(q_1+1)s-c+r_1=t+(q_1+1)(s-c)$ and $\alpha\left(I^{(m)}\right)=(q_2+1)s-c+r_2=m+(q_2+1)(s-c)$. We need to show the following inequality
$$
\frac{t+(q_1+1)(s-c)}{t}\geq \frac{m+(q_2+1)(s-c)+c-1}{m+c-1}.
$$
which is equivalent to
$$
(q_1+1)(s-c)(m+c-1)\geq t(q_2+1)(s-c).
$$
The above inequality  holds because of the following argument:
$$
(q_1+1)(s-c)(m+c-1)- t(q_2+1)(s-c)
$$
$$
=(s-c)\left[(q_1+1)(m+c-1)-t(q_2+1)\right]
$$
$$
=(s-c)\left[(q_1+1)(q_2c+r_2+c-1)-(q_1c+r_1)(q_2+1)\right]
$$
$$
=(s-c)\left[(q_1+1)((q_2+1)c+r_2-1)-((q_1+1)c+r_1-c)(q_2+1)\right]
$$
$$
=(s-c)\left[(q_1+1)(r_2-1)+(c-r_1)(q_2+1)\right]\geq 0
$$
because $s\geq c$, $r_2\geq 1$, and $r_1\leq c$.

(5) By (4), we just need to show  $\frac{\alpha\left(I^{(m)}\right)+c-1}{m+c-1}\geq \frac{\alpha\left(I^{(m)}\right)+N-1}{m+N-1}$ for $1\leq c\leq N$. We first show $\alpha\left(I^{(m)}\right)\geq m$. Write $m=qc+r$ for $1\leq r\leq c$. Then by \cite[Corollary 4.6]{GeHaMi}, we have $\alpha\left(I^{(m)}\right)=(q+1)s-c+r$. Hence
$$
\alpha\left(I^{(m)}\right)-m=(q+1)s-c+r-qc-r=(q+1)(s-c)\geq 0
$$
which implies $\alpha\left(I^{(m)}\right)\geq m$. Now to show
$$
\frac{\alpha\left(I^{(m)}\right)+c-1}{m+c-1}\geq \frac{\alpha\left(I^{(m)}\right)+N-1}{m+N-1}=\frac{\alpha\left(I^{(m)}\right)+c-1+(N-c)}{m+c-1+(N-c)},
$$
we just need to show
$$
\left(\alpha\left(I^{(m)}\right)+c-1\right)(N-c)\geq (m+c-1)(N-c).
$$
But this follows because
$$
\left(\alpha\left(I^{(m)}\right)+c-1\right)(N-c)- (m+c-1)(N-c)=(N-c)\left(\alpha\left(I^{(m)}\right)-m\right)\geq 0.
$$

Finally (6) follows from (5) by setting $m=1$.
\end{proof}

\begin{rem} Proposition \ref{star} proves several conjectures of symbolic powers for the defining ideals of star configurations of codimension $c$.  Proposition \ref{star} (1) and (3) prove that these ideals satisfy a general version of  conjectures regarding to the containment of symbolic powers and regular powers  proposed by Harbourne and Huneke (see \cite[Conjecture 2.1, Conjecture 4.1.5]{HaHu}). Proposition \ref{star} (2) verifies them for  conjectures  proposed by Harbourne (see \cite[Conjecture 8.4.2, Conjecture 8.4.3]{BRHKKSS09}). Proposition \ref{star} (4) and (5) show that Demailly's conjecture (see \cite[page 101]{De} or \cite[Question 4.2.1]{HaHu}) holds true for these ideals. As a special case of Demailly's conjecture, Proposition \ref{star} (6) concludes  that Chudnovsky's conjecture (see \cite{Ch}) also holds for such ideals.

All of these conjectures were verified for star configurations of codimension $N$ (see \cite[Example 8.4.8]{BRHKKSS09}, \cite[Corollary 3.9, Corollary 4.1.7, and the comments after Question 4.2.1]{HaHu}).
\end{rem}

\vskip 0.3in

\noindent {\bf Acknowledgment.} Ricardo Burity is grateful to CAPES for funding his one year stay at University of Idaho.

\bigskip

\renewcommand{\baselinestretch}{1.0}
\small\normalsize 

\bibliographystyle{amsalpha}

\end{document}